\newtheorem{teo}{Theorem}
\newtheorem{conj}{Conjecture}
\newtheorem{propo}{Proposition}
\newtheorem{lema}{Lemma}
\newtheorem{coro}{Corollary}
\newtheorem{obs}{Remark}
\newcommand{\dd}{{\rm d}}
\newcommand{\bbR}{{\mathbb R}}
\newcommand{\al}{\alpha}
\newcommand{\la}{\lambda}
\newcommand{\p}{\partial}
\newcommand{\be}{\beta}
\newcommand{\G}{\Gamma}
\newcommand{\T}{\Theta}
\newcommand{\Wlams}{W\left(-\frac{x}{\la_s t^{\al/2}},-\frac{\al}{2},1\right)}
\newcommand{\Wlaml}{W\left(-\frac{x}{\la_l t^{\al/2}},-\frac{\al}{2},1\right)}
\newcommand{\varsims}{\frac{x}{\la_s t^{\al/2}}}
\begin{document}

\begin{center}
\LARGE
\textbf{Explicit solution for a two--phase fractional Stefan problem with a heat flux condition at the fixed face}
\end{center}
                   \begin{center}
                  {\sc  Sabrina D. Roscani and Domingo A. Tarzia}\\
 CONICET - Depto. Matem\'atica,
FCE, Univ. Austral,\\
 Paraguay 1950, S2000FZF Rosario, Argentina \\
(sabrinaroscani@gmail.com, dtarzia@austral.edu.ar)
                   \vspace{0.2cm}

       \end{center}

\small

\noindent \textbf{Abstract: }

A generalized Neumann solution for the two-phase fractional Lam\'e--Clapeyron--Stefan problem for a semi--infinite material with constant initial temperature and a particular heat flux condition at the fixed face is obtained, when a restriction on data is satisfied. The fractional derivative in the Caputo sense of order $\al \in (0,1)$ respect on the temporal variable is considered in two governing heat equations and in one of the conditions for the free boundary. Furthermore, we find a relationship between this fractional free boundary problem and another one with a constant temperature condition at the fixed face   and based on that fact, we obtain an inequality for the coefficient which characterizes the fractional phase-change interface obtained in  Roscani--Tarzia, Adv. Math. Sci. Appl., 24 (2014), 237-249. We also recover the restriction on data and the classical Neumann solution, through the error function, for the classical two-phase Lam\'e-Clapeyron-Stefan problem for the case $\al=1$.\\

\noindent \textbf{Keywords:} Caputo Fractional Derivative, Lam\'e--Clapeyron--Stefan Problem, Neumann  Solutions,  heat flux  boundary condition, temperature boundary condition.\\

\section{Introduction}
\label{intro}
In the last decades the fractional diffusion equation has been  extensively studied \cite{Eidelman-Kochubei-LIBRO,LuMaPa-TheFundamentalSolution,FM-Libro,Povstenko,Pskhu-Libro,Pskhu:2009,SaYa:2011} and in the recent years some works on fractional free boundary problems (that is, free boundary problems where a fractional derivative is involved),  were published \cite{At:2012,BlKl:2015,CeTa:2017,JiMi:2009,RaSi:2017,RoSa:2013,RoTa:2014,Ta:2015,Voller:2014,VoFaGa:2013}. In particular, in \cite{KhoZaFe:2003}, the classical Lam\'e--Clapeyron--Stefan problem was studied by using the fractional derivative of order $\frac{1}{2}$.\\
 Recall that free boundary problems for the one--dimensional classical heat equation are problems linked to the processes of melting and freezing, which have a latent heat condition at the solid-liquid interface connecting the velocity of the free boundary and the heat fluxes of the two temperatures corresponding to the solid and liquid phases. This kind of problems are known in the literature as Stefan problems or, more precisely, as Lam\'e--Clapeyron--Stefan problems. 
We remark that the first work on phase--change problems was done by G. Lam\'e and B.P. Clapeyron in 1831 \cite{Lame:1831}  by studying the solidification of the Earth planet, which has been missing in the scientific literature for more than a century. Next, sixty years later, the phase--change problem was continued by J. Stefan through several works around year 1890 \cite{Stefan:1889} by studying the melting of the polar ice. For this reason, we call these kind of problems as Lam\'e--Clapeyron--Stefan problems or simply by Stefan problems.\\  
	Nowadays, there exist thousands of papers on the classical Lam\'e--Clapeyron--Stefan problem, for example the books \cite{Alexiades,Cannon,Crank,Elliott,Fasano:2005,Gupta,Lunardini,Rubi} and the large bibliography given in \cite{Tarzia:biblio}. Especially, a review on explicit solutions with moving boundaries was given in \cite{Tarzia}.\\
	
In this paper, a generalized Neumann solution for the two--phase fractional Lam\'e--Clapeyron--Stefan problem for a semi--infinite domain is obtained when a constant  initial data and a  Neumann boundary condition at the fixed face are considered. Recently, a generalized Neumann solution for the two--phase fractional Lam\'e-Clapeyron-Stefan problem for a semi--infinite domain with constant initial data and a Dirichlet condition at the fixed face was given in \cite{RoTa:2014}.

 So,  the classical time derivative will be  replaced by a fractional derivative in the sense of Caputo of order  $0<\al<1$, which is present in the two governing heat equations and in one of the governing conditions for the free boundary. The fractional Caputo derivative is defined in \cite{Ca:1967} as:

\begin{equation}\label{derivCaputo} D^{\alpha}f(t)=\left\{\begin{array}{lc} \frac{1}{\Gamma(1-\al)}\displaystyle\int^{t}_{0}(t-\tau)^{-\al} f'(\tau)\, \dd\tau, &  0<\al<1\\
f'(t), &   \al=1. \end{array}\right.\end{equation}

where $\Gamma$ is the Gamma function defined in $\bbR^+$ by the following expression:
$$ \Gamma(x)=\int_0^\infty t^{x-1}e^{-t}\, \dd t. $$ 

 It is known that the fractional Caputo derivative verifies that \cite{Kilbas}:\\
 For every $b\in \bbR^+$, 
\begin{equation}\label{Caputo-lineal}
D^{\alpha} \text{ is a linear operator in } W^1(0,b)=\left\{ f\,\in\, \mathcal{C}^1(0,b]\, \colon \,f'\in L^1(0,b)\right\},
\end{equation} 

\begin{equation}\label{Caputo-deriv-constantes}
D^{\alpha}(C)=0 \quad \text{for every} \, \text{constant }\, C \in \bbR
\end{equation}
and 
\begin{equation}\label{Caputo-deriv-t^al}
D^{\al}(t^\beta)=\frac{\G(\be+1)}{\G(\beta-\al+1)}t^{\beta -\al} \quad \text{for every} \, \text{constant }\, \beta >-1.\end{equation}

Now we define the two functions (Wright and Mainardi functions) which are very important in order to obtain the explicit solution given in the following Sections.\\

The Wright function is defined in \cite{Wr:1933} as:
\begin{equation}\label{W}
W(x;\rho;\be)=\sum^{\infty}_{n=0}\frac{x^{n}}{n!\G(\rho n+\be)}, \qquad x\in \bbR, \quad \rho>-1, \quad \be \in \bbR
\end{equation}

and the Mainardi function, which is a particular case of the Wright functions, is defined in \cite{GoLuMa:1999} as:
\begin{equation}\label{M}
M_\rho (x)= W(-x,-\rho,1-\rho)=\sum^{\infty}_{n=0}\frac{(-x)^n}{n! \G\left( -\rho n+ 1-\rho \right)}, \quad x\in \bbR, \, 0<\rho<1. \end{equation}

\begin{propo}\label{Propiedades Wright}Some basic properties of the Wright function are the following:
\begin{enumerate}
\item \label{deriv W} {\rm \cite{Kilbas}} The Wright function (\ref{W}) is a differentiable  function for every $\rho>-1$, $\be \in \bbR$ such that 
\begin{equation*}\frac{\p W}{\p x}\left(x;\rho;\be\right)=W\left(x;\rho;\be +\rho\right).
 \end{equation*}
\item \label{casos part W}{\rm\cite{RoSa:2013}}
$\displaystyle\lim_{\al\rightarrow 1^-}W\left(-x;-\frac{\al}{2};1\right)=W\left(-x;-\frac{1}{2};1\right)={\rm erfc}\left(\frac{x}{2}\right)$, and  \linebreak $\displaystyle\lim_{\al\rightarrow 1^-} 1- W\left(-x;-\frac{\al}{2};1\right)= 1- W\left(-x;-\frac{1}{2};1\right)=  {\rm erf}\left(\frac{x}{2}\right)$.
\item \label{deriv frac W} {\rm \cite{Pskhu-Libro}} For all $\al,c \in \bbR^+$, $\rho \in (0,1)$, $\beta \in \bbR$ we have
$$ D^\al\left(x^{\be-1}W(-cx^{-\rho},-\rho,\be) \right)=x^{\be-\al-1}W(-cx^{-\rho},-\rho,\be-\al).
$$
\item  \label{decrecientes}{\rm \cite{RoSa:2013}} For every $\al\in (0,1)$,  $W\left(-x,-\frac{\al}{2},1\right)$ is a positive and strictly decreasing function in  $\bbR^+$ such that  $0<W\left(-x,-\frac{\al}{2},1\right)< 1. $

\item\label{lim W} {\rm \cite{Wr1:1934}} For every $\al\in (0,1)$, and $ \beta>0$,    
\begin{equation} \displaystyle\lim_{x \rightarrow \infty } W\left(-x,-\frac{\al}{2},\beta\right)=0.\end{equation}
\end{enumerate}
\end{propo}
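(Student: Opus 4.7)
My plan is to treat the five items separately, since each is essentially a direct manipulation of the series (\ref{W}) together with the basic properties of the Caputo derivative recorded in (\ref{Caputo-lineal})--(\ref{Caputo-deriv-t^al}); all five are stated with references, so the task is really to assemble a short, self-contained verification that points to the cited sources for the delicate analytic facts.

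For item \ref{deriv W}, I would differentiate (\ref{W}) term-by-term: the pointwise differentiability and uniform convergence on compact sets follow from the ratio test applied to the Wright series, and the shifted series $\sum_{n\ge 1} x^{n-1}/((n-1)!\,\Gamma(\rho n+\beta))$ becomes, after the re-indexing $m=n-1$, exactly $W(x;\rho;\beta+\rho)$. Item \ref{casos part W} follows by substituting $\al=1$ directly into the Wright series and splitting the sum into even and odd indices: the poles of $1/\Gamma(-k/2+1/2)$ kill the odd terms, and collecting the remaining even terms reproduces the Taylor series of $\mathrm{erfc}(x/2)$; the passage $\al\to 1^-$ is then legitimate because the series converges uniformly in $\al$ on compact sets of $x$.

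For item \ref{deriv frac W}, I would apply $D^\al$ termwise to $x^{\be-1}W(-cx^{-\rho},-\rho,\be)$, use (\ref{Caputo-deriv-t^al}) on each monomial $x^{\be-\rho n-1}$, and check that the resulting ratio of Gamma functions is precisely the coefficient appearing in the series of $x^{\be-\al-1}W(-cx^{-\rho},-\rho,\be-\al)$; the interchange of $D^\al$ with the infinite sum is the subtle point and is settled by the estimate in the Pskhu reference. Item \ref{decrecientes} is then an easy combination: item \ref{deriv W} with $\rho=-\al/2$ gives $\partial_x W(-x,-\al/2,1)=-W(-x,-\al/2,1-\al/2)=-M_{\al/2}(x)$ by (\ref{M}), and the Mainardi function is strictly positive on $\bbR^+$ (cited in Roscani--Santillan), so $W(-x,-\al/2,1)$ is strictly decreasing; its value at $x=0$ is $1/\Gamma(1)=1$ and item \ref{lim W} pins the limit at $+\infty$ to be $0$, whence the whole graph sits inside $(0,1)$.

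Finally, item \ref{lim W} itself I would obtain from Wright's asymptotic expansion in the cited 1934 paper: for $\rho\in(-1,0)$ the function $W(-x,\rho,\be)$ admits an algebraic-times-stretched-exponential decay as $x\to\infty$, hence tends to $0$. The main obstacle in this proof is not any single calculation but the analytic justifications: termwise differentiation (both classical in item \ref{deriv W} and fractional in item \ref{deriv frac W}), the interchange of $\lim_{\al\to 1^-}$ with the Wright series in item \ref{casos part W}, and the asymptotic analysis needed for item \ref{lim W}. I would handle each by invoking the corresponding reference rather than redoing the estimates, keeping the proof short and focused on verifying the algebraic identities that turn one Wright series into another.
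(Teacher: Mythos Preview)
The paper does not actually prove Proposition~\ref{Propiedades Wright}: it is presented as a compilation of known facts, each item carrying a citation, and no argument follows the statement. Your sketch therefore already goes well beyond what the paper supplies, and the overall strategy---termwise differentiation for item~\ref{deriv W}, termwise Caputo differentiation plus the power rule~(\ref{Caputo-deriv-t^al}) for item~\ref{deriv frac W}, the derivative identity combined with positivity of $M_{\al/2}$ for item~\ref{decrecientes}, and Wright's 1934 asymptotics for item~\ref{lim W}---is the standard and correct one that the cited references carry out.

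One concrete slip to fix in item~\ref{casos part W}: with $\rho=-\tfrac{1}{2}$ and $\be=1$ the general coefficient is $1/\Gamma(1-n/2)$, so it is the \emph{even} indices $n=2,4,\dots$ that are killed (since $\Gamma$ has poles at $0,-1,-2,\dots$), while $n=0$ and the odd indices survive; those surviving terms give $1$ plus the odd-power expansion of $-\mathrm{erf}(x/2)$, yielding $\mathrm{erfc}(x/2)$. You have the parity reversed in your description. Also, in item~\ref{deriv frac W} the exponents $\be-\rho n-1$ eventually become $\leq -1$, so formula~(\ref{Caputo-deriv-t^al}) is not literally applicable term by term; you are right that this is exactly the point where one must invoke the analysis in the Pskhu reference rather than a naive termwise computation.
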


In \cite{Tar:1981} the following classical phase-change problem was studied:

\noindent \textbf{Problem:} Find the free boundary $x=s(t)$, and the temperatures $T_s=T_s(x,t)$ and $T_l=T_l(x,t)$ such that the following equations and conditions are satisfied:

\begin{equation}{\label{FP1}}
\begin{array}{lll}
     (i)  &   {T_s}_t-\lambda_s^2\,{T_s}_{xx}=0, &   x>s(t), \,  t>0,\\
     (ii) &   {T_l}_t-\lambda_l^2\,{T_l}_{xx}=0, &   0<x<s(t), \,  t>0\\ 
    (iii) & s(0)=0, \\
     (iv) & T_s(x,0)=T_s(\infty,t)=T_i<T_m & x>0, \, t>0, \\
     (v)  &  T_s(s(t),t)=T_m,   &  t>0,  \\
      (vi)  &  T_l(s(t),t)=T_m,   &  t>0,  \\
     (vii) & k_s {T_s}_x(s(t),t)-k_l{T_l}_x(s(t),t)=\rho l \dot{s}(t), & t>0,\\    
        (viii) & k_l{T_l}_x(0,t)=-\frac{q_0}{t^{1/2}}, & t>0, 
                                             \end{array}
                                             \end{equation}

where $\la_s^2=\frac{k_s}{\rho c_s}$, $\la_l^2=\frac{k_l}{\rho c_l}$, $k_s$, $c_s$   and  $k_l$,$ c_l$ are the diffusion, conductivity and specific heat coefficients of the solid and liquid phases respectively, $\rho$  is the common density of mass, $l$ is the latent heat of fusion by unit of mass,  $T_i$ is the constant initial temperature,  $T_m$  is the melting temperature and $q_0$  is the coefficient which characterizes the heat flux at the fixed face $x=0$. \\
The explicit solution to Problem (\ref{FP1}) was obtained in \cite{Tar:1981} through the error function, when the following restriction is satisfied by data:  

\begin{equation}\label{classical restriction}
q_0>\frac{k_s (T_m-T_i)}{\la_s\sqrt{\pi}}.
\end{equation}

In this paper we consider a ``fractional melting problem'', of order $0<\al<1$, for the semi-infinite material  $x>0$ with an initial constant ``fractional temperature'' and a  ``fractional heat flux boundary condition'' at the face $x=0$. We will use a Caputo derivative operator, which converges to the classical derivative when $\alpha$ tends to 1. The interesting aspect is that for the limit case ($\al =1$) the results obtained for this generalization coincide with the results of the classical case. So, in view of the analogy that exists between the classical case and the fractional one, we  make an abuse of language by using terminologies such as ``fractional temperature'', ``fractional heat equation'' or ``fractional Stefan condition''. These terms go hand in hand with the generalization proposed in the sense of operators and we do not pretend to give them a   physical approach. It is necessary to point that the physical approach associated to this type of operators is of our current interest, mainly because of the mathematical coherence that the results have together with their convergence to the classical known results.  \\  
So, the problem to be studied is the following: \\

\noindent \textbf{Problem:} Find the fractional free boundary  $x=r(t)$, defined for  $t>0$, and the fractional temperature  $\Theta=\Theta(x,t)$, defined for $x>0$, $t>0$, such that the following equations and conditions are satisfied ($0<\al<1$):
 
\begin{equation}{\label{FPalpha}}
\begin{array}{lll}
     (i)  &   D^\al_t\Theta_s-\lambda_s^2\,{\Theta_s}_{xx}=0, &   x>r(t), \,  t>0,\\
     (ii) &   D^\al_t\Theta_l-\lambda_l^2\,{\Theta_l}_{xx}=0, &   0<x<r(t), \,  t>0\\ 
    (iii) & r(0)=0, \\
     (iv) & \T_s(x,0)=\T_s(\infty,t)=T_i<T_m & x>0, \, t>0, \\
     (v)  &  \T_s(r(t),t)=T_m,   &  t>0,  \\
      (vi)  &  \T_l(r(t),t)=T_m,   &  t>0,  \\
     (vii) & k_s {\T_s}_x(r(t),t)-k_l{\T_l}_x(r(t),t)=\rho l D^\al r(t), & t>0,\\    
        (viii) & k_l{\T_l}_x(0,t)=-\frac{q_0}{t^{\al/2}}, & t>0, 
                                             \end{array}
                                             \end{equation}

Note that the suffix $t$ in the operator $D^\al$ denotes that the fractional derivative is taken in the $t-variable$. \\
In Section 2, a necessary condition for the coefficient $q_0>0$, which characterizes the fractional heat flux boundary condition at the face $x=0$, is obtained in order to have an instantaneous two-phase fractional Lam\'e--Clapeyron--Stefan problem.\\ 
In Section 3, we give a sufficient condition for the coefficient   $q_0>0$   (which coincides with the necessary condition for $q$  given in Section 2) in order to obtain a generalized Neumann solution for the two--phase fractional Lam\'e--Clapeyron--Stefan problem (\ref{FPalpha}) for a semi--infinite material with a constant initial condition and a fractional heat flux boundary condition at the fixed face  $x=0$. This solution is given as a function of the Wright and Mainardi functions.\\
Moreover, when  $\al=1$, we recover the Neumann solution, through the error function, for the classical two--phase Lam\'e--Clapeyron--Stefan problem given in \cite{Tar:1981}, when an inequality for the coefficient that characterizes the heat flux boundary condition is satisfied.\\ 
In Section 4, we consider  two two-phase fractional Lam\'e--Clapeyron--Stefan problems having a fractional heat flux and a fractional temperature boundary conditions on the fixed face $x=0$, respectively and a possible equivalence between then is analyzed.\\ 
In Section 5, an inequality for the coefficient which characterizes the free boundary of the two-phase fractional Lam\'e--Clapeyron--Stefan problem with a fractional temperature boundary condition   given recently in \cite{RoTa:2014}, is also obtained.\\
In Section 6, we recover the results obtained in \cite{RoSa:2013} for the one--phase fractional Lam\'e--Clapeyron--Stefan problem as a particular case of the present work (see Sections 3 and 4).

\section{Necessary condition to obtain an instantaneous two--phase fractional Stefan  problem with a heat flux boundary condition at the fixed face}
\label{sec:1}
In order to obtain a necessary condition for data to have an instantaneous phase-change process for problem (\ref{FPalpha}) we consider the following fractional heat conduction problem of order  $0<\al<1$ for the solid phase in the first quadrant with an initial constant temperature and a heat flux boundary condition at $x=0$:

\begin{equation}{\label{PFlujoIC}}
\begin{array}{lll}
     (i)  &   D^\al_t\Theta -\lambda_s^2\,{\Theta}_{xx}=0, &   x>0, \,  t>0,\\
     (ii) &   \Theta(x,0)=T_i, &   x>0, \\ 
   (iii) & k_s {\T}_x(0,t)=-\frac{q_0}{t^{\al/2}}, & t>0. \\
                                            \end{array}
                                           \end{equation}

\begin{lema}  We have:
\begin{enumerate}
\item  The solution of the fractional heat problem (\ref{PFlujoIC}) is given by 
\begin{equation}\label{sol-PFlujoIC} \T(x,t)=T_i+\frac{q_0 \la_s \G(1-\al/2)}{k_S} W\left(-\frac{x}{\la_s t^{\al/2}},-\frac{\al}{2},1\right), \quad x>0, \, t>0. 
\end{equation}

\item If the coefficient $q_0$ satisfies the inequalities
\begin{equation}\label{cond sin cambio de fase} 
0<q_0\leq \frac{k_s(T_m-T_i)}{ \la_s \G(1-\al/2)},
\end{equation}
then  problem (\ref{FPalpha}) is only a fractional heat conduction problem for the initial solid phase. By the contrary, if
\begin{equation}\label{cond para cambio de fase}
 q_0> \frac{k_s(T_m-T_i)}{ \la_s \G(1-\al/2)}, 
\end{equation}
then (\ref{cond para cambio de fase}) is a necessary condition for data which ensures an instantaneous fractional phase--change problem (\ref{FPalpha}).
 \end{enumerate}
\end{lema}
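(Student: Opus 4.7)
The plan is to treat the two parts of the lemma separately, with part 1 being a direct verification using the properties of the Wright function collected in Proposition~\ref{Propiedades Wright}, and part 2 being an analysis of the maximum of the temperature profile.

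For part 1, I would verify by substitution that the candidate function $\T(x,t)$ in (\ref{sol-PFlujoIC}) satisfies each of the three requirements of (\ref{PFlujoIC}). For the fractional PDE, I would apply Proposition~\ref{Propiedades Wright} item~\ref{deriv frac W} with parameters $\be=1$, $\rho=\al/2$ and $c=x/\la_s$ (viewing the Wright function as a function of $t$) to obtain
\[ D^\al_t W\!\left(-\tfrac{x}{\la_s t^{\al/2}};-\tfrac{\al}{2};1\right)=t^{-\al}\,W\!\left(-\tfrac{x}{\la_s t^{\al/2}};-\tfrac{\al}{2};1-\al\right). \]
Then I would apply item~\ref{deriv W} twice (in $x$) to get
\[ \la_s^2\,\T_{xx}=\frac{q_0\la_s\G(1-\al/2)}{k_s}\cdot t^{-\al}\,W\!\left(-\tfrac{x}{\la_s t^{\al/2}};-\tfrac{\al}{2};1-\al\right), \]
so that $D^\al_t\T-\la_s^2\T_{xx}=0$. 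For the initial condition, since $x/(\la_s t^{\al/2})\to\infty$ as $t\to 0^+$, item~\ref{lim W} yields $\T(x,0)=T_i$. For the boundary condition, a single application of item~\ref{deriv W} gives $\T_x(0,t)=-\frac{q_0\G(1-\al/2)}{k_s\,t^{\al/2}}W(0;-\al/2;1-\al/2)$, and since $W(0;-\al/2;1-\al/2)=1/\G(1-\al/2)$ from the series definition (\ref{W}), the factor $\G(1-\al/2)$ cancels and one recovers $k_s\T_x(0,t)=-q_0/t^{\al/2}$.

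For part 2, the key observation is that, by item~\ref{decrecientes}, the function $W(-\cdot;-\al/2;1)$ is strictly decreasing and positive on $\bbR^+$, so $\T(x,t)$ attains its spatial maximum at $x=0$. Moreover, evaluating (\ref{sol-PFlujoIC}) at $x=0$ gives $W(0;-\al/2;1)=1/\G(1)=1$, so
\[ \T(0,t)=T_i+\frac{q_0\la_s\G(1-\al/2)}{k_s}, \]
which is independent of $t$. If $q_0\leq k_s(T_m-T_i)/(\la_s\G(1-\al/2))$, then $\T(x,t)\leq T_m$ everywhere, the solid never reaches the melting temperature, and problem (\ref{FPalpha}) reduces to the pure heat conduction problem (\ref{PFlujoIC}). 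Conversely, if the inequality (\ref{cond para cambio de fase}) holds, the pure-conduction solution would give $\T(0,t)>T_m$, which is inadmissible in the solid phase; hence an instantaneous phase change at $t=0^+$ is necessary, establishing (\ref{cond para cambio de fase}) as a necessary condition on the data.

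The main obstacle I anticipate is purely bookkeeping in part 1: correctly matching the indices of the Wright function across the applications of items~\ref{deriv W} and~\ref{deriv frac W}, and justifying the termwise differentiation under the series. The physical argument in part 2 is essentially a comparison principle, and I would state it as such rather than grinding through a maximum-principle proof.
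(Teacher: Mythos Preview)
Your proposal is correct and matches the paper's argument closely. The only stylistic difference is in part~1: the paper starts from the ansatz $\T(x,t)=a+b\bigl[1-W(-x/(\la_s t^{\al/2}),-\al/2,1)\bigr]$, invokes Proposition~\ref{Propiedades Wright} items~\ref{deriv W} and~\ref{deriv frac W} to assert it solves the fractional diffusion equation, and then determines $a,b$ from conditions $(\ref{PFlujoIC}\text{-}ii)$ and $(\ref{PFlujoIC}\text{-}iii)$, whereas you verify the given formula directly by substitution; both routes use exactly the same ingredients, and your part~2 is essentially identical to the paper's.
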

\begin{proof}
\begin{enumerate}\item From Proposition \ref{Propiedades Wright} items \ref{deriv W} and  \ref{deriv frac W}, and properties (\ref{Caputo-lineal}) and (\ref{Caputo-deriv-constantes}), we can state that 
\begin{equation}\label{Theta_a_b}
\T(x,t)=a+b\left[1-\Wlams\right], \qquad x>0,\, t>0,
\end{equation}
is a solution to the fractional diffusion equation $(\ref{PFlujoIC}-i)$, where $a$ and $b$ are two constants to be determined. \\
Taking the derivative of (\ref{Theta_a_b}) with  respect to $x$, by using Proposition \ref{Propiedades Wright} item \ref{deriv W},  we get
\begin{equation}\label{theta_x}
 \T_x(x,t)=\frac{b}{\la_st^{\al/2}}M_{\al/2}\left(\frac{x}{\la_s t^{\al/2}}\right). 
 \end{equation}
From conditions $(\ref{PFlujoIC}-ii)$ and  $(\ref{PFlujoIC}-iii)$, and being $M_{\al/2}(0)=\frac{1}{\G(1-\al/2)}$ and \linebreak $W\left(0,-\frac{\al}{2},1\right)=1$, we obtain that
\begin{equation}\label{a-b}
a=T_i+\frac{q_0\la_s}{k_s}\G(1-\al/2), \quad b=-\frac{q_0 \la_s \G(1-\al/2)}{k_S},
\end{equation}
 that is, we obtain the expression (\ref{sol-PFlujoIC})  as a solution to problem (\ref{PFlujoIC}).
 \item From Proposition \ref{Propiedades Wright} items \ref{decrecientes} and \ref{lim W}, it results that function (\ref{sol-PFlujoIC}) 
 $$\T(x,t)=T_i+\frac{q_0 \la_s \G(1-\al/2)}{k_S} W\left(-\frac{x}{\la_s t^{\al/2}},-\frac{\al}{2},1\right)$$
 is a decreasing function in the variable $x$ for every $t \in \bbR^+$ such that $\T(\infty,t)=T_i$ is a constant for all $t>0$. Therefore problem (\ref{FPalpha}) has an instantaneous fractional phase--change problem if and only if  the constant temperature at the boundary $x=0$  is greater than the melting temperature $T_m$, that is  if and only if 
 $$T_i+\frac{q_0\la_s}{k_s}\G(1-\al/2)>T_m,$$ 
 which is equivalent to have that inequality (\ref{cond para cambio de fase}) holds.
  \end{enumerate}
\end{proof}

\begin{obs} When  $\al=1$, the inequality {\rm(\ref{cond para cambio de fase})} is given by {\rm(\ref{classical restriction})}  because  \linebreak $\G\left(\frac{1}{2}\right)=\sqrt{\pi}$, which was first established in \cite{Tar:1981}.
\end{obs}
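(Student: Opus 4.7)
The statement is a direct specialization of inequality (\ref{cond para cambio de fase}) to the limit case $\al = 1$, so my plan is essentially a one-step substitution check, together with a brief verification that no additional subtlety is hidden in the limit.

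First, I would substitute $\al = 1$ into the right-hand side of (\ref{cond para cambio de fase}), which replaces the factor $\G(1-\al/2)$ by $\G(1-1/2) = \G(1/2)$. I would then invoke the well-known value $\G(1/2) = \sqrt{\pi}$ (a standard identity for the Gamma function, obtainable, for instance, from the Gaussian integral $\int_0^\infty e^{-u^2}\,\dd u = \sqrt{\pi}/2$ after the change of variable $t = u^2$ in the definition of $\G$). With this, the bound $\frac{k_s(T_m-T_i)}{\la_s \G(1-\al/2)}$ becomes precisely $\frac{k_s(T_m-T_i)}{\la_s\sqrt{\pi}}$, which is the right-hand side of (\ref{classical restriction}).

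Second, I would briefly note the consistency of taking $\al = 1$ in the underlying problem (\ref{FPalpha}): by definition (\ref{derivCaputo}) the Caputo derivative $D^\al$ reduces to the ordinary derivative when $\al = 1$, so equations $(i)$, $(ii)$, $(vii)$ of (\ref{FPalpha}) collapse into the corresponding equations of (\ref{FP1}), and the boundary flux $-q_0/t^{\al/2}$ becomes $-q_0/t^{1/2}$. Hence the necessary condition (\ref{cond para cambio de fase}) for an instantaneous phase change in the fractional problem specializes exactly to the classical restriction (\ref{classical restriction}) of \cite{Tar:1981}.

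There is essentially no obstacle here; the only minor subtlety worth acknowledging is that the derivation in the preceding lemma was carried out under the hypothesis $0<\al<1$, so the equality $\al = 1$ is approached as a boundary case. Using Proposition \ref{Propiedades Wright}(\ref{casos part W}) (the limit of the Wright function as $\al \to 1^-$ yields the complementary error function) ensures that the whole construction passes continuously to the limit, so the equivalence of inequalities is genuine rather than merely formal.
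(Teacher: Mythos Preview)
Your proposal is correct and matches the paper's approach, which in fact gives no proof beyond the statement itself: the remark is just the one-line substitution $\al=1$, $\G(1/2)=\sqrt{\pi}$. Your additional paragraph on the continuity of the construction as $\al\to 1^-$ is more than the paper provides and is not required here, since the remark is purely a formal specialization of the inequality rather than a claim about convergence of solutions (that convergence is addressed separately later in Proposition~3 and Theorem~2).
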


\subsection{Sufficient condition to obtain an instantaneous two--phase--fractional Stefan problem with a heat flux boundary condition at the fixed face}
\label{sec:2}
In this Section, we study a two--phase Lam\'e--Clapeyron--Stefan problem for the time fractional diffusion equation, of order $0<\al<1$, with an initial constant temperature and a heat flux boundary condition at the face $x=0$  given by the differential equations and initial and boundary conditions given in problem (\ref{FPalpha}). Taking into account the result in the previous Section and the method developed in \cite{RoTa:2014}, an explicit solution to problem (\ref{FPalpha}) can be obtained. In fact, we have the following result:

\begin{propo}\label{exist-sol-FPalpha} Let $T_i<T_m$ be. If the coefficient $q_0$  satisfies the inequality (\ref{cond para cambio de fase}) then there exists an instantaneous fractional phase-change (melting) process and the problem (\ref{FPalpha}) has the generalized Neumann explicit solution given by:
\begin{equation}\label{Theta_l}
\T_l(x,t)=T_m+\frac{q_0\la_l \G(1-\al/2)}{k_l}\left[ \Wlaml -W\left(-\la_l \mu_\al, -\frac{\al}{2},1\right)\right]
\end{equation}
\begin{equation}\label{Theta_s}
\T_s(x,t)=T_i+(T_m-T_i)\frac{\Wlams}{W\left(-\mu_\al, -\frac{\al}{2},1\right)}
\end{equation}
\begin{equation}\label{r}
r(t)=\mu_\al \la_s t^{\al/2}
\end{equation}
where the coefficient  $\mu_\al>0$ is a  solution of the following equation:
\begin{equation}\label{eq mu-1}
G_\al(x)=\frac{\G\left(1+\frac{\al}{2}\right)}{\G\left(1-\frac{\al}{2}\right)}x, \qquad x>0
\end{equation}
with
\begin{equation}\label{G-al}
G_\al(x)=\frac{q_0\la_l \G(1-\al/2)}{\rho l \la_s}M_{\al/2}(\la x)-\frac{k_s (T_m-T_i)}{\rho l \la_s^2}F_{2\al}(x),
\end{equation}
\begin{equation}\label{F-2al}
F_{2\al}(x)=\frac{M_{\al/2}(x)}{W\left(-x,-\frac{ \al}{2}, 1 \right)}
\end{equation}
and 
\begin{equation}\label{la}
\la=\frac{\la_s}{\la_l}>0.
\end{equation}
\end{propo}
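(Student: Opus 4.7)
The plan is to use the self-similar ansatz suggested by Lemma 1. I would seek
$$\T_s(x,t) = A_s + B_s\, W\!\left(-\tfrac{x}{\la_s t^{\al/2}},-\tfrac{\al}{2},1\right), \qquad \T_l(x,t) = A_l + B_l\, W\!\left(-\tfrac{x}{\la_l t^{\al/2}},-\tfrac{\al}{2},1\right),$$
with the interface of power-law type $r(t) = \mu_\al \la_s t^{\al/2}$. Item \ref{deriv frac W} of Proposition \ref{Propiedades Wright}, applied with $\rho=\al/2$, $\be=1$, together with (\ref{Caputo-deriv-constantes}), shows that each Wright term is annihilated by the fractional heat operator $D^\al_t - \la^2\,\p_{xx}$, so equations (i)--(ii) of (\ref{FPalpha}) are satisfied identically. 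The choice $r(t)\propto t^{\al/2}$ is forced by requiring that the arguments $x/(\la_s t^{\al/2})$ and $x/(\la_l t^{\al/2})$ at $x=r(t)$ be time independent, so that conditions (v)--(vii) hold for all $t>0$.

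I would then determine the four constants $A_s,B_s,A_l,B_l$. From $\lim_{x\to\infty}W(-x;-\al/2;1)=0$ (item \ref{lim W}), condition (iv) gives $A_s=T_i$; evaluating (v) at $x=r(t)$, where the argument of $W$ reduces to $-\mu_\al$, fixes $B_s=(T_m-T_i)/W(-\mu_\al;-\al/2;1)$, producing (\ref{Theta_s}). On the liquid side, differentiating in $x$ via item \ref{deriv W} brings out a Mainardi factor (exactly as in the derivation of (\ref{theta_x})); evaluating at $x=0$ with $M_{\al/2}(0)=1/\G(1-\al/2)$ and matching (viii) gives $B_l=q_0\la_l\,\G(1-\al/2)/k_l$, and imposing (vi) determines $A_l$, yielding (\ref{Theta_l}).

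The sole remaining condition is the Stefan balance (vii). By (\ref{Caputo-deriv-t^al}),
$$D^\al r(t)=\mu_\al\la_s\,\frac{\G(1+\al/2)}{\G(1-\al/2)}\,t^{-\al/2},$$
and the $x$-derivatives of $\T_s$ and $\T_l$ at $x=r(t)$ each carry a common factor $t^{-\al/2}$, so the time dependence cancels throughout (vii). Substituting the closed forms just obtained and rearranging, the balance reduces exactly to (\ref{eq mu-1}), with $G_\al$ and $F_{2\al}$ as defined in (\ref{G-al})--(\ref{F-2al}).

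The main obstacle is to show that (\ref{eq mu-1}) admits a positive solution. I would study $H_\al(x):=G_\al(x)-\frac{\G(1+\al/2)}{\G(1-\al/2)}x$ on $(0,\infty)$ and apply the intermediate value theorem. Using $M_{\al/2}(0)=1/\G(1-\al/2)$ and $W(0;-\al/2;1)=1$, so that $F_{2\al}(0)=1/\G(1-\al/2)$, I would verify that $H_\al(0^+)>0$ reduces precisely to the hypothesis (\ref{cond para cambio de fase}). On the other hand, items \ref{decrecientes} and \ref{lim W} give $M_{\al/2}(\la x)\to 0$ as $x\to\infty$, while $F_{2\al}\ge 0$, so $G_\al$ stays bounded above; the linear subtraction then forces $H_\al(x)\to -\infty$, and continuity of $W$ and $M_{\al/2}$ supplies a root $\mu_\al>0$, closing the argument.
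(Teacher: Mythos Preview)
Your proposal is correct and follows essentially the same route as the paper: the same similarity ansatz in the Wright function (the paper writes it as $A+B[1-W]$ rather than $A+B\,W$, which is an inessential reparametrization), the same use of conditions (iv)--(vi), (viii) to pin down the four constants, and the same reduction of the Stefan condition (vii) to the transcendental equation (\ref{eq mu-1}) via (\ref{Caputo-deriv-t^al}).

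The one genuine difference lies in the existence argument for a positive root of (\ref{eq mu-1}). The paper invokes the fact (proved in \cite{RoTa:2014}) that $F_{2\al}(+\infty)=+\infty$, which gives $G_\al(+\infty)=-\infty$ directly. You instead observe that $F_{2\al}\ge 0$ keeps $G_\al$ bounded above, so that the linear term $\tfrac{\G(1+\al/2)}{\G(1-\al/2)}\,x$ alone drives $H_\al$ to $-\infty$. Your argument is slightly more elementary, since it avoids the nontrivial asymptotic $F_{2\al}(+\infty)=+\infty$; the paper's argument, on the other hand, yields the stronger conclusion $G_\al(+\infty)=-\infty$, which is of some independent interest. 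Both are valid and lead to the same conclusion via the intermediate value theorem.
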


\proof
Following  \cite{RoTa:2014}, we propose the following solution:
\begin{equation}\label{Theta_l-AB}
\T_l(x,t)=A+B\left[1- \Wlaml \right]
\end{equation}
\begin{equation}\label{Theta_s-CD}
\T_s(x,t)=C+D\left[1- \Wlams \right]
\end{equation}
\begin{equation}\label{r-mu}
r(t)=\mu \la_s t^{\al/2}
\end{equation}
where the coefficients $A, B, C, D$ and $\mu$  are constants and must be determined. According to the results in the previous Section and the linearity of the fractional derivative, functions $\T_s$ and $\T_l$ are solutions of the fractional diffusion equations $(\ref{FPalpha}-i)$ and $(\ref{FPalpha}-ii)$, respectively. Starting from conditions $(\ref{FPalpha}-vi)$ and $(\ref{FPalpha}-viii)$, we obtain the following system of two equations:

\begin{equation}\label{sis1-1}
T_m=\T_l(r(t),t)=A+B\left[ 1- W\left(\mu \la , -\frac{\al}{2},1 \right)\right]
\end{equation}

\begin{equation}\label{sis1-2}
-\frac{q_0}{t^{\al/2}}=k_l\T_{l_\infty}(0,t)=\frac{B k_l}{\la_l t^{\al/2}}M_{\al/2}(0),
\end{equation}

from which we obtain:
\begin{equation}\label{sol-sis1}
A=T_m+\frac{q_0 \la_l \G\left(1-\frac{\al}{2}\right)}{k_l}\left[1-W\left(-\mu \la_l, -\frac{\al}{2},1 \right)\right], \quad \quad B=\frac{q_0\la_l \G\left(1-\frac{\al}{2}\right)}{k_l}.
\end{equation}
Then, the fractional temperature of the liquid phase is given by (\ref{Theta_l}). 


From conditions $(\ref{FPalpha}-iv)$ and $(\ref{FPalpha}-v)$ we have the system of equations:

\begin{equation}\label{sis2-1}
T_i=\T_s(x,0)=C+D,
\end{equation}

\begin{equation}\label{sis2-2}
T_m=\T_s(r(t),t)=C+D\left[1 -W\left(-\mu, -\frac{\al}{2},1\right)\right],
\end{equation}
and then we have:
\begin{equation}\label{sol-sis2}
C=T_i+\frac{T_m-T_i}{W\left(-\mu, -\frac{\al}{2},1\right)}, \quad   D=-\frac{T_m-T_i}{W\left(-\mu, -\frac{\al}{2},1\right)}.
\end{equation}
Therefore, the fractional temperature of the solid phase is given by (\ref{Theta_s}).\\

In order to determine the coefficient $\mu >0$  we must consider the fractional Lam\'e--Clapeyron--Stefan condition $(\ref{FPalpha}-vii)$ which, taking into account Proposition \ref{Propiedades Wright}  and (\ref{Caputo-deriv-t^al}), gives us the equation $(\ref{eq mu-1})$.\\
It was proved in \cite{RoTa:2014} that $F_{2\al}(+\infty)=+\infty$, then the function $G_\al=G_\al(x)$, defined by (\ref{G-al}), has the following properties:

\begin{equation}\label{prop-G_al}
G_\al(0^+)=\frac{q_0\la_l }{\rho l \la_s}-\frac{k_s (T_m-T_i)}{\rho l \la_s^2 \G(1-\frac{\al}{2})}, \qquad G_\al(+\infty)=-\infty.
\end{equation}

From the continuity of $G_\al$ (due to Proposition \ref{Propiedades Wright}  and \ref{decrecientes}) and (\ref{prop-G_al})), it yields that equation (\ref{eq mu-1}) has a solution $\mu_\al >0$ if $G_\al(0^+)>0$ which is verified under condition (\ref{cond para cambio de fase}). Then, the solution $\left\{(\ref{Theta_l}), (\ref{Theta_s}),(\ref{r})\right\}$ holds.

\endproof

\begin{obs}
The solution of the equation (\ref{eq mu-1}) will be unique if we can prove that function  $G_{\al}$ is a strictly decreasing function in  $\bbR^+$, or equivalently if we can prove that  function $F_{2\al}$  is an increasing function in $\bbR^+$  (taking into account that function $M_{\al/2}$  is a decreasing function). 

 Function  $F_{2\al}(x)=\frac{M_{\al/2}( x)}{W\left(-x;-\frac{\al}{2}, 1\right)}$ is a
continuous positive function, which is a quotient of two decreasing functions. Some graphics are presented below:

\begin{figure}[h]
\centering
\mbox {\subfigure[{\footnotesize $F_{2\al}$ for $\al=1/16, 1/8, 1/4, 3/8$ and $1/2.$ }
]{\epsfysize=55mm \epsfbox {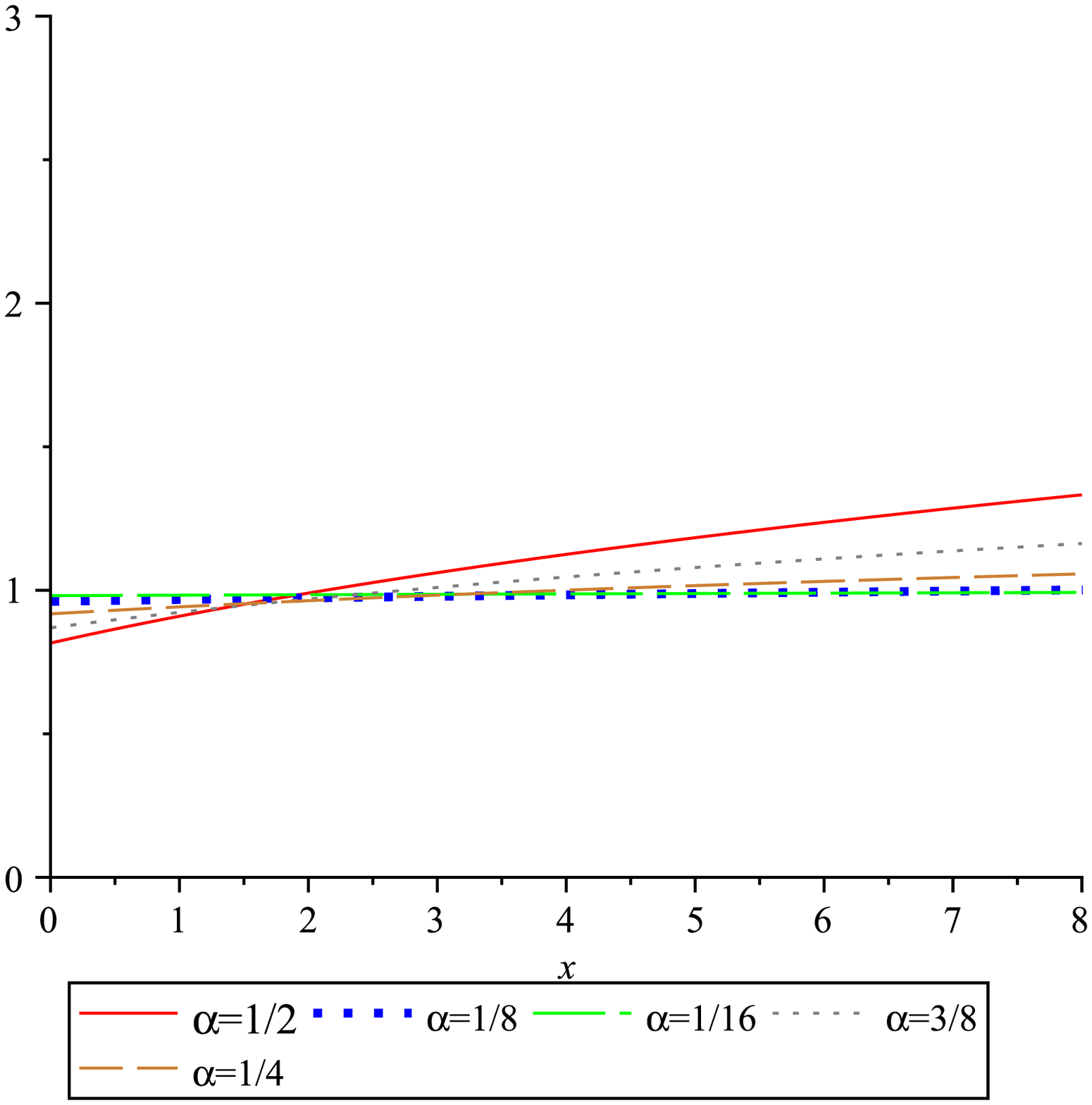
}} \quad \quad \quad
\subfigure[{\footnotesize $F_{2\al}$ for $\al=1/2, 5/8, 7/8, 3/4$ and $15/16.$} 
] {\epsfysize=55mm \epsfbox{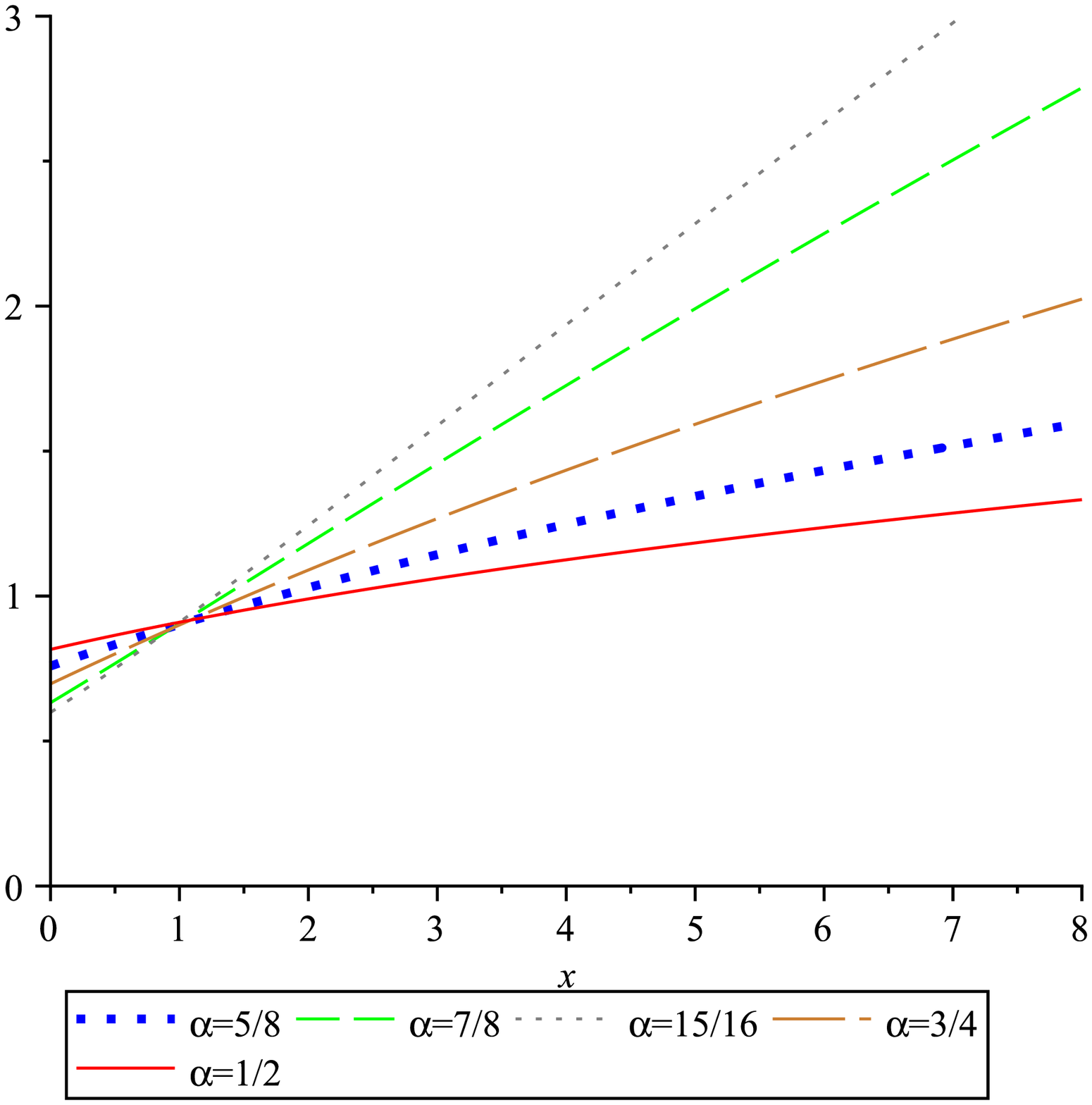
}} }
\end{figure}

We can see in the graphics that $F_{2\al}$  is an increasing function for every chosen  parameter. We wonder if this situation is true for every $\al \in (0,1)$. A simple computation gives that $F_{2\al}$  is an increasing function if and only if 
\begin{equation}\label{F'2}
\left[M_{\al/2}( x)\right]^2-W\left(-x;-\frac{\al}{2}, 1\right)\cdot W\left(-x;-\frac{\al}{2}, 1-\alpha\right)>0.
\end{equation}

We have proved in \cite{RoTa:2017-TwoDifferent} that for every $x>0$,
\begin{equation}
\G(1-\al)W\left(-x;-\frac{\al}{2}, 1-\alpha \right)>\G\left(1-\frac{\al}{2}\right)M_{\al/2}( x)> W\left(-x;-\frac{\al}{2}, 1\right)>0,\end{equation}
but this is not a sufficient condition to prove (\ref{F'2}). \\
Also, the inequality (\ref{F'2}) is a Tur\'an-type inequality for Wright functions of parameter $-\frac{\al}{2} \in (-1,0)$. An analogue result for Wright functions with positive parameter was proved in \cite{Me:2017}, that is, it was proved that 
$$  \left[W\left(x;\al, \beta+\al\right)\right]^2- W\left(x;\al, \beta\right)W\left(x;\al, \beta+2\al\right)\geq 0, \, \forall \, x>0, \al>0, \, \be>0.$$
So we state the following conjecture:
\end{obs}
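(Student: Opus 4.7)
The conjecture, extending the positive-parameter Tur\'an-type inequality of \cite{Me:2017} to the negative-parameter regime $-\rho\in(-1,0)$, is
\begin{equation*}
\left[W(-x;-\rho,\beta+\rho)\right]^{2} \;\geq\; W(-x;-\rho,\beta)\,W(-x;-\rho,\beta+2\rho),\qquad x>0,
\end{equation*}
and specialises to (\ref{F'2}) for $\rho=\al/2$, $\beta=1-\al$. Equivalently, it asserts log-concavity of $\beta\mapsto W(-x;-\rho,\beta)$ at the relevant midpoint. As a sanity check in the limit $\al\to 1^-$, the identities $W(-x;-1/2,1)=\mathrm{erfc}(x/2)$, $M_{1/2}(x)=\pi^{-1/2}e^{-x^{2}/4}$, and (by item~\ref{deriv W} of Proposition~\ref{Propiedades Wright}) $W(-x;-1/2,0)=\tfrac{x}{2\sqrt{\pi}}e^{-x^{2}/4}$ reduce (\ref{F'2}) to $\mathrm{erfc}(x/2)<\tfrac{2}{x\sqrt{\pi}}e^{-x^{2}/4}$, the classical Komatsu/Mills bound for the complementary error function. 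The conjecture is therefore a fractional-order analogue of the Mills-ratio inequality.

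The natural first attempt, Cauchy--Schwarz on an integral representation, fails for a sign reason. For $0<\rho<1$ the Mainardi--Pagnini-type formula
\begin{equation*}
W(-x;-\rho,\beta)=\frac{1}{\pi}\int_{0}^{\infty} e^{-r-xr^{\rho}\cos(\pi\rho)}\,\frac{\sin\!\bigl(\pi\beta-xr^{\rho}\sin(\pi\rho)\bigr)}{r^{\beta}}\,dr
\end{equation*}
encodes the $\beta$-dependence as a product of $r^{-\beta}$ and an oscillating sine. The factor $r^{-\beta}$ alone gives log-\emph{convexity} under Cauchy--Schwarz against a positive measure --- the wrong direction --- while the sine factor prevents any such measure from being positive in the first place. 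This is precisely why the proof of \cite{Me:2017} does not transfer: for positive $\al$, $W(x;\al,\beta)$ is a positive-coefficient series with a Laplace-type positive-measure representation, so that log-concavity falls out of Cauchy--Schwarz directly; in our regime neither ingredient is available.

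The more promising route is a chain-of-derivatives argument. Writing $u_{\beta}(x):=W(-x;-\al/2,\beta)$ and $\Phi(x):=u_{1-\al/2}(x)^{2}-u_{1}(x)\,u_{1-\al}(x)$, item~\ref{deriv W} of Proposition~\ref{Propiedades Wright} gives
\begin{equation*}
\Phi'(x)= -\,u_{1-\al/2}(x)\,u_{1-\al}(x)+u_{1}(x)\,u_{1-3\al/2}(x),
\end{equation*}
a new Tur\'an-type expression on the shifted triple $\{1-3\al/2,\,1-\al/2,\,1\}$; iterating produces a chain. The boundary data are favourable: $\Phi(0)=\G(1-\al/2)^{-2}-\G(1-\al)^{-1}\geq 0$ by log-convexity of $\G$ applied at $\{1-\al,1\}$, and $\Phi(+\infty)=0$ by item~\ref{lim W} of Proposition~\ref{Propiedades Wright}. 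To conclude $\Phi>0$ on $(0,\infty)$ it would then suffice to combine the chain identity with the known stretched-exponential asymptotics of $u_{\beta}$ and rule out an interior zero.

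The main obstacle is structural. The desired direction of inequality is incompatible with Cauchy--Schwarz, and the negative-parameter Wright function does not admit a positive-measure integral representation in which the $\beta$-dependence is log-concave. I expect a successful proof to combine the differential chain identity above with either the probabilistic interpretation of $M_{\al/2}$ as a reparametrisation of a one-sided $\al/2$-stable density (bringing sharp stable-law tail estimates into play), or a Mellin--Barnes analysis that propagates the $x=0$ Gamma-function inequality $\G(1-\al/2)^{2}\leq \G(1-\al)$ forward into $x>0$. In either case, a genuinely new ingredient beyond the techniques of \cite{Me:2017} will be required.
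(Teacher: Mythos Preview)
The statement you were asked to address is a \emph{Remark} in the paper, not a theorem with a proof. The paper does not prove the Tur\'an-type inequality (\ref{F'2}); on the contrary, after the quotient-rule computation reducing monotonicity of $F_{2\al}$ to (\ref{F'2}) and the observation that neither the chain of inequalities from \cite{RoTa:2017-TwoDifferent} nor the positive-parameter result of \cite{Me:2017} suffices, the paper explicitly leaves the matter open as Conjecture~1. There is therefore no ``paper's own proof'' to compare against.

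Your proposal is likewise not a proof, and you acknowledge as much: the Cauchy--Schwarz route fails for the sign reason you identify, and the chain-of-derivatives argument is left incomplete (``it would then suffice to \ldots\ rule out an interior zero'', ``I expect a successful proof to combine \ldots'', ``a genuinely new ingredient \ldots\ will be required''). What you have produced is an informed discussion of \emph{why} the conjecture resists standard techniques. Several of your side computations are correct and go beyond the paper's own remarks --- the Mills-ratio sanity check at $\al=1$, the explicit formula for $\Phi'(x)$, and the boundary value $\Phi(0)=\Gamma(1-\al/2)^{-2}-\Gamma(1-\al)^{-1}\geq 0$ via log-convexity of $\Gamma$ --- but none of them closes the gap. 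In short: the paper states a conjecture, and your proposal confirms that it remains a conjecture while mapping out the obstacles more explicitly; neither constitutes a proof of (\ref{F'2}).
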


\begin{conj} The function $F_{2\al} \colon \bbR^+ \rightarrow \bbR^+$ such that $F_{2\al}(x)=\frac{M_{\al/2}( x)}{W\left(-x;-\frac{\al}{2}, 1\right)}$  is an increasing function with $F_{2\al}(0^+)=\frac{1}{\G\left(1-\frac{\al}{2}\right)}$ and $F_{2\al}(+\infty)=+\infty$. 
\end{conj}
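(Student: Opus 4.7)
The strategy is to separate the conjecture into three parts: the limit at $0^+$, the limit at $+\infty$, and the monotonicity. The two limits are almost immediate. From the series definitions (\ref{W}) and (\ref{M}) one reads $M_{\al/2}(0)=1/\G(1-\al/2)$ and $W(0;-\al/2,1)=1/\G(1)=1$, so $F_{2\al}(0^+)=1/\G(1-\al/2)$. For the behavior as $x\to+\infty$, I would invoke the classical saddle-point asymptotic for the Wright function,
\begin{equation*}
W(-x;-\rho,\be)\;\sim\; C(\rho,\be)\,x^{(1/2-\be)/(1-\rho)}\exp\!\bigl(-(1-\rho)\rho^{\rho/(1-\rho)}\,x^{1/(1-\rho)}\bigr),\qquad x\to+\infty,
\end{equation*}
valid for $0<\rho<1$. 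With $\rho=\al/2$ the exponential factors in the numerator $M_{\al/2}(x)=W(-x;-\al/2,1-\al/2)$ and in the denominator $W(-x;-\al/2,1)$ are identical and cancel in the quotient, while the power exponents combine to $\al/(2-\al)>0$, giving $F_{2\al}(x)\to+\infty$.

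\textbf{Reduction of monotonicity to the Tur\'an-type inequality.} Applying Proposition \ref{Propiedades Wright}\,(\ref{deriv W}) twice, together with the chain rule in the inner $-x$, yields $\frac{d}{dx}W(-x;-\al/2,1)=-M_{\al/2}(x)$ and $\frac{d}{dx}M_{\al/2}(x)=-W(-x;-\al/2,1-\al)$. The quotient rule then gives
\begin{equation*}
F_{2\al}'(x)\;=\;\frac{[M_{\al/2}(x)]^{2}\;-\;W(-x;-\al/2,1)\,W(-x;-\al/2,1-\al)}{[W(-x;-\al/2,1)]^{2}},
\end{equation*}
so $F_{2\al}'(x)>0$ is \emph{exactly} the Tur\'an-type inequality (\ref{F'2}).

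\textbf{The main obstacle.} Inequality (\ref{F'2}) expresses the log-concavity (shift $\al/2$) of $\be\mapsto W(-x;-\al/2,\be)$ at $\be=1-\al/2$, i.e.\ the \emph{negative-parameter} analogue of the Mész\'aros inequality of \cite{Me:2017}. My preferred attack would be to obtain an integral representation of the form
\begin{equation*}
W(-x;-\al/2,\be)\;=\;\int_{0}^{\infty}\psi(x,u)\,u^{\be-1}\,\dd\nu(u),
\end{equation*}
with $\nu$ a positive measure and $\psi(x,u)>0$ independent of $\be$; then Cauchy--Schwarz applied to $\be_{1}=1$ and $\be_{2}=1-\al$ (whose arithmetic mean is $1-\al/2$) would yield (\ref{F'2}) at once. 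The natural starting point is the Hankel-contour representation
\begin{equation*}
W(-x;-\al/2,\be)\;=\;\frac{1}{2\pi i}\int_{\mathrm{Ha}} e^{\,u-xu^{\al/2}}\,u^{-\be}\,\dd u,
\end{equation*}
in which the $\be$-dependence already has the desired shape; collapsing the contour onto the positive real axis gives the oscillatory Sonine-type expression
\begin{equation*}
W(-x;-\al/2,\be)\;=\;\frac{1}{\pi}\int_{0}^{\infty} e^{-r-xr^{\al/2}\cos(\pi\al/2)}\, r^{-\be}\sin\!\bigl(\pi\be-xr^{\al/2}\sin(\pi\al/2)\bigr)\,\dd r,
\end{equation*}
and the hard step is to recast this, for the relevant window $\be\in[1-\al,1]$, as a positive density in an auxiliary variable. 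Should that route fail, a back-up plan is to adapt the Cauchy-product argument of \cite{Me:2017}: expand the left-hand side of (\ref{F'2}) as a power series in $x$, reindex, and prove termwise non-negativity of the coefficients, which reduce to finite sums of products $1/[\G(-\al k/2+1)\,\G(\al(k-n)/2+1-\al)]$. The genuine difficulty---and the reason the statement remains a conjecture---is that for the negative parameter $-\al/2$ the reciprocal Gamma factors $1/\G(-\al n/2+\be)$ are not of fixed sign, so neither the positivity of the oscillatory integrand nor the termwise positivity in the Cauchy product is transparent, in sharp contrast with the positive-parameter case settled in \cite{Me:2017}.
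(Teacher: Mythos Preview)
This statement is presented in the paper as a \emph{conjecture}, not a theorem; the paper offers no proof of it. Your handling of the two limit values is correct and agrees with what the paper already records: $F_{2\al}(0^{+})=1/\G(1-\al/2)$ is immediate from the series, and $F_{2\al}(+\infty)=+\infty$ is quoted in the paper from \cite{RoTa:2014} (your Wright asymptotic gives an independent check, and the cancellation of the exponential factor leaving the power $x^{\al/(2-\al)}$ is correct). Your reduction of the monotonicity claim to the Tur\'an-type inequality (\ref{F'2}) is exactly the computation carried out in the Remark preceding the conjecture, so on this point you and the paper coincide.

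The genuine gap is that you do not prove (\ref{F'2}). What you present under ``main obstacle'' is a research outline, not a proof: the Hankel-contour/Cauchy--Schwarz route requires recasting the oscillatory Sonine integral as an integral against a \emph{positive} measure in an auxiliary variable, and you do not carry this out (nor is it clear that it can be done for $\be\in[1-\al,1]$); the Cauchy-product route, as you yourself note, runs into reciprocal-Gamma coefficients of indefinite sign, which is precisely why the positive-parameter argument of \cite{Me:2017} does not transfer. So your proposal leaves the conjecture exactly where the paper leaves it: the limits are settled, the monotonicity is equivalent to (\ref{F'2}), and (\ref{F'2}) remains open. Your write-up is an honest and well-organized account of the state of play, but it should not be labeled a proof.
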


\begin{teo}  Let $T_i<T_m$  be. If the coefficient $q_0$  satisfies the inequality (\ref{cond para cambio de fase}), under the assumption of Conjecture 1,  then $\left\{(\ref{Theta_l}),(\ref{Theta_s}), (\ref{r})\right\} $ is the unique generalized Nuemann similarity--solution to the free boundary problem (\ref{FPalpha}), where $\mu_\al$ is the unique solution to equation (\ref{eq mu-1}).
\end{teo}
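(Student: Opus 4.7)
The existence of the triple $\{(\ref{Theta_l}),(\ref{Theta_s}),(\ref{r})\}$ is already delivered by Proposition \ref{exist-sol-FPalpha}, so my plan is to focus only on uniqueness inside the class of similarity solutions of the ansatz form (\ref{Theta_l-AB})--(\ref{r-mu}). First I would observe that, for any fixed $\mu>0$, the boundary and initial conditions $(\ref{FPalpha}-iv)$, $(\ref{FPalpha}-v)$, $(\ref{FPalpha}-vi)$ and $(\ref{FPalpha}-viii)$ determine the four constants $A,B,C,D$ in closed form, exactly as in the existence argument (via the systems (\ref{sis1-1})--(\ref{sis1-2}) and (\ref{sis2-1})--(\ref{sis2-2})). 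Consequently the entire similarity solution is pinned down by $\mu$ alone, and since the fractional Stefan condition $(\ref{FPalpha}-vii)$ forces any admissible $\mu$ to satisfy (\ref{eq mu-1}), uniqueness of the similarity solution is equivalent to uniqueness of the positive root of (\ref{eq mu-1}).

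To obtain this uniqueness I would compare both sides of (\ref{eq mu-1}) monotonically. The right-hand side $x\mapsto \frac{\G(1+\al/2)}{\G(1-\al/2)}x$ is a strictly increasing line through the origin. The core step is then to prove that $G_\al$ is strictly decreasing on $\bbR^+$. Using the decomposition (\ref{G-al}), $G_\al$ splits into a positive multiple of $M_{\al/2}(\la x)$ and a negative multiple of $F_{2\al}(x)$. The Mainardi term is strictly decreasing, as recalled in the Remark preceding Conjecture 1, while Conjecture 1, assumed here, provides that $F_{2\al}$ is strictly increasing on $\bbR^+$; both effects reinforce each other, so $G_\al$ is strictly decreasing.

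Finally I would combine this monotonicity with the endpoint information in (\ref{prop-G_al}): $G_\al(0^+)=\frac{q_0\la_l}{\rho l\la_s}-\frac{k_s(T_m-T_i)}{\rho l\la_s^2\,\G(1-\al/2)}>0$, which is strictly positive precisely because the restriction (\ref{cond para cambio de fase}) holds, and $G_\al(+\infty)=-\infty$. Since the right-hand side of (\ref{eq mu-1}) vanishes at the origin and grows to $+\infty$, the strictly decreasing continuous graph of $G_\al$ and the strictly increasing line cross in exactly one positive point $\mu_\al$, which gives the desired uniqueness. The genuinely delicate ingredient is the monotonicity of $F_{2\al}$, which is why the theorem is stated conditionally on Conjecture 1; as noted in the Remark, this reduces to the Turán-type inequality (\ref{F'2}) for the Wright function with negative parameter $-\al/2$, an open problem not settled in the text.
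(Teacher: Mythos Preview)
Your proposal is correct and follows the same approach that the paper indicates: the paper does not supply a separate proof of this theorem, but the preceding Remark makes explicit that uniqueness of $\mu_\al$ (and hence of the similarity triple) follows once $G_\al$ is strictly decreasing on $\bbR^+$, which is exactly the consequence of Conjecture~1 combined with the monotonicity of $M_{\al/2}$. Your write-up simply spells out this implicit argument---determination of $A,B,C,D$ from $\mu$, reduction to a single-variable crossing problem, and the monotone comparison of $G_\al$ with the linear right-hand side---so there is nothing to add or correct.
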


\begin{obs} If $(x,t)$ is in the liquid face  ($0<x<s(t), t>0$), then $0<x<\la_s\mu t^{\al/2}$, or equivalently  $0<\frac{x}{t^{\al/2}}<\la_s\mu $.  Multiplying by $\la_l$ gives $0<\frac{x}{\la_l t^{\al/2}}<\la\mu $.  Then, from  Proposition \ref{Propiedades Wright} item \ref{decrecientes} it follows that $ \Wlaml -W\left(-\la_l \mu_\al, -\frac{\al}{2},1\right)>0 $ and therefore   the explicit temperature of the liquid phase corresponding to problem (\ref{FPalpha}) satisfy the following inequality

\begin{equation}\label{theta_l>T_m}
\begin{split}
\T_l(x,t)& =T_m+\frac{q_0\la_l \G(1-\al/2)}{k_l}\left[ \Wlaml -W\left(-\la_l \mu_\al, -\frac{\al}{2},1\right)\right]\\
 & >T_m,  \qquad 0<x<r(t), \quad t>0
 \end{split}
\end{equation}

 Analogously the explicit temperature of the solid phase corresponding to problem (\ref{FPalpha}) satisfy the following inequality

\begin{equation}\label{theta_s>T_i}
T_i<\T_s(x,t)<T_m, \qquad x>r(t), \quad t>0.
\end{equation}
\end{obs}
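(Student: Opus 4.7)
My plan is to read both inequalities directly off the explicit solution (\ref{Theta_l})--(\ref{Theta_s}), invoking only the strict positivity and strict monotonicity of the map $s\mapsto W(-s,-\al/2,1)$ on $\bbR^+$ recorded in Proposition \ref{Propiedades Wright}(\ref{decrecientes}).

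For the liquid phase, I would first note that the prefactor $\frac{q_0\la_l\G(1-\al/2)}{k_l}$ in front of the bracket in (\ref{Theta_l}) is strictly positive, so the sign of $\T_l(x,t)-T_m$ is governed by the bracket. On the liquid face $0<x<r(t)=\mu_\al\la_s t^{\al/2}$, dividing by $\la_l t^{\al/2}$ gives $0<\frac{x}{\la_l t^{\al/2}}<\la\mu_\al$ with $\la=\la_s/\la_l$. Strict monotonicity of $W(-\cdot,-\al/2,1)$ on $\bbR^+$ then yields $\Wlaml > W(-\la\mu_\al,-\al/2,1)$, so the bracket is positive and $\T_l(x,t)>T_m$.

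For the solid phase I would start from the equivalent form (\ref{Theta_s}), which I rewrite as
\begin{equation*}
\T_s(x,t)=T_i+(T_m-T_i)\,\frac{W\!\left(-\frac{x}{\la_s t^{\al/2}},-\frac{\al}{2},1\right)}{W\!\left(-\mu_\al,-\frac{\al}{2},1\right)}.
\end{equation*}
On the solid face $x>r(t)$ the similarity variable $\frac{x}{\la_s t^{\al/2}}$ exceeds $\mu_\al$, and combining strict monotonicity with the positivity bound $0<W(-s,-\al/2,1)<1$ for $s>0$ shows that the displayed quotient lies in $(0,1)$. Multiplying by the positive quantity $T_m-T_i$ and adding $T_i$ gives $T_i<\T_s(x,t)<T_m$.

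There is no substantive obstacle: the whole statement collapses to the already tabulated positivity/monotonicity in Proposition \ref{Propiedades Wright}(\ref{decrecientes}). The only bookkeeping to watch is the conversion between the free boundary $r(t)=\mu_\al\la_s t^{\al/2}$ and the two phase-dependent similarity variables, which differ by the factor $\la=\la_s/\la_l$ between the liquid and solid sides and account for the rescaling of $\mu_\al$ in the liquid-phase inequality.
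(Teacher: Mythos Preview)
Your argument is correct and matches the paper's own reasoning, which is embedded in the remark itself: both use the strict positivity and strict monotonicity of $s\mapsto W(-s,-\al/2,1)$ from Proposition~\ref{Propiedades Wright}(\ref{decrecientes}) after converting the region $0<x<r(t)$ (resp.\ $x>r(t)$) into an inequality for the appropriate similarity variable. The only difference is that you spell out the solid-phase step (showing the quotient lies in $(0,1)$), whereas the paper dismisses it with ``Analogously''; you also silently correct the paper's typo $\la_l\mu_\al$ to $\la\mu_\al$ in the second Wright function, consistent with the derivation (\ref{sol-sis1}).
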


\begin{propo} Let  $T_i<T_m$ be. By considering $\al =1$ in Proposition \ref{exist-sol-FPalpha},  we recover the classical Neumann explicit solution and the inequality (\ref{classical restriction}) for the coefficient which characterized the heat flux at  $x=0$ obtained  in \cite{Tar:1981}.
\end{propo}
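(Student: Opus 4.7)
The plan is to substitute $\al=1$ into every formula of Proposition \ref{exist-sol-FPalpha} and to reduce each Wright/Mainardi expression to its classical counterpart. Three items must be verified: first, that the data restriction (\ref{cond para cambio de fase}) becomes (\ref{classical restriction}); second, that the explicit profiles (\ref{Theta_l})--(\ref{Theta_s}) and the free boundary (\ref{r}) collapse to the classical Neumann ones; third, that the transcendental equation (\ref{eq mu-1}) for $\mu_\al$ reduces to the classical Neumann equation determining the similarity coefficient in \cite{Tar:1981}.

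Besides $\G(1/2)=\sqrt{\pi}$, the two identities I need are $W(-x,-1/2,1)=\mathrm{erfc}(x/2)$ from Proposition \ref{Propiedades Wright} item \ref{casos part W}, together with the closed form of the Mainardi function obtained from it by differentiation via item \ref{deriv W}, namely $M_{1/2}(x)=W(-x,-1/2,1/2)=\tfrac{1}{\sqrt{\pi}}\,e^{-x^2/4}$. The first identity immediately turns the right-hand side of (\ref{cond para cambio de fase}) into $k_s(T_m-T_i)/(\la_s\sqrt{\pi})$, which is exactly (\ref{classical restriction}); this settles item (i).

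Substituting these identities into (\ref{Theta_l})--(\ref{Theta_s}) and (\ref{r}) at $\al=1$ produces a free boundary $r(t)=\mu_1\la_s\sqrt{t}$ (so that $\xi:=\mu_1/2$ plays the role of the classical Neumann similarity coefficient), a liquid profile of the form $\T_l(x,t)=T_m+\frac{q_0\la_l\sqrt{\pi}}{k_l}\bigl[\mathrm{erfc}(x/(2\la_l\sqrt{t}))-\mathrm{erfc}(\la\xi)\bigr]$, and a solid profile $\T_s(x,t)=T_i+(T_m-T_i)\,\mathrm{erf}(x/(2\la_s\sqrt{t}))/\mathrm{erf}(\xi)$; these are the classical Neumann expressions of \cite{Tar:1981}.

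For item (iii), in (\ref{eq mu-1}) the prefactor evaluates to $\G(3/2)/\G(1/2)=1/2$, and the two summands inside $G_1$ become a Gaussian term $\tfrac{q_0\la_l}{\rho l \la_s}\,e^{-(\la\mu_1)^2/4}$ coming from $M_{\al/2}$, and an error-function quotient $\tfrac{k_s(T_m-T_i)}{\rho l \la_s^2\sqrt{\pi}}\,e^{-\mu_1^2/4}/\mathrm{erf}(\mu_1/2)$ coming from $F_{2\al}$; after the rescaling $\mu_1=2\xi$ the equality $G_1(2\xi)=\xi$ is precisely the classical Neumann transcendental equation of \cite{Tar:1981}. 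The only obstacle I foresee is routine bookkeeping: tracking the factor-$2$ between $\mu_\al$ and $\xi$, and consistently grouping the combinations $\la=\la_s/\la_l$, $\la_s$ and $\la_l$. Every function-level substitution is a one-line consequence of the two identities displayed above.
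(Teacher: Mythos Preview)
Your strategy is exactly the paper's: set $\al=1$, invoke $\G(1/2)=\sqrt{\pi}$, $W(-x,-1/2,1)=\mathrm{erfc}(x/2)$ and $M_{1/2}(x)=\tfrac{1}{\sqrt{\pi}}e^{-x^2/4}$, and read off the classical formulas and the transcendental equation. So there is no difference of method.

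There is, however, a computational slip that you should fix. You correctly use $W(-x,-1/2,1)=\mathrm{erfc}(x/2)$ for the liquid profile, but then write the solid profile as
\[
\T_s(x,t)=T_i+(T_m-T_i)\,\frac{\mathrm{erf}\bigl(x/(2\la_s\sqrt{t})\bigr)}{\mathrm{erf}(\xi)}
\]
and the $F_{2\al}$ contribution as $e^{-\mu_1^2/4}/\mathrm{erf}(\mu_1/2)$. Both are wrong: the substitution of $W(-\,\cdot\,,-1/2,1)$ into (\ref{Theta_s}) and into (\ref{F-2al}) produces $\mathrm{erfc}$, not $\mathrm{erf}$, in the denominator. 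Indeed your ``$\mathrm{erf}$'' version of $\T_s$ does not satisfy the far-field condition $\T_s(+\infty,t)=T_i$. The paper obtains
\[
\T_s(x,t)=T_i+(T_m-T_i)\,\frac{\mathrm{erfc}\bigl(x/(2\la_s\sqrt{t})\bigr)}{\mathrm{erfc}(\mu_1/2)},\qquad
F_2(x)=\frac{(1/\sqrt{\pi})\,e^{-x^2/4}}{\mathrm{erfc}(x/2)},
\]
and the resulting classical equation (your item (iii)) reads
\[
\frac{q_0}{\rho l \la_s}\,e^{-\la^2 x^2/4}-\frac{k_s(T_m-T_i)}{\rho l \la_s^2\sqrt{\pi}}\,\frac{e^{-x^2/4}}{\mathrm{erfc}(x/2)}=\frac{x}{2},
\]
after which the rescaling $x=2\xi$ gives the equation in \cite{Tar:1981}. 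Once you replace $\mathrm{erf}$ by $\mathrm{erfc}$ in those two places, your write-up coincides with the paper's proof.
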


\proof 
As it was said in Remark 1, the inequality (\ref{classical restriction})  is recovered because 
$\G(1/2)=\sqrt{\pi}$. By the other side, 
\begin{equation}\label{Theta_l-1}
\T_l(x,t)=T_m+\frac{q_0\la_l \G(1-\al/2)}{k_l}\left[W\left(-\frac{x}{\la_l t^{1/2}}, -\frac{1}{2},1\right) -W\left(-\la_l \mu_1, -\frac{\al}{2},1\right)\right]
\end{equation}
\begin{equation}\label{Theta_s-1}
\T_s(x,t)=T_i+(T_m-T_i)\frac{W\left(-\frac{x}{\la_2 t^{1/2}}, -\frac{1}{2},1\right)}{W\left(-\mu_1, -\frac{1}{2},1\right)}
\end{equation}
\begin{equation}\label{r-1}
r(t)=\mu_1 \la_s t^{1/2}
\end{equation}
where the coefficient  $\mu=\mu_1>0$ is the  solution of equation:
\begin{equation}\label{eq mu-1-1}
G_1(x)=\frac{\G\left(3/2\right)}{\G\left(1/2\right)}x, \qquad x>0
\end{equation}
with
\begin{equation}\label{G-al}
G_1(x)=\frac{q_0\la_l \G(1/2)}{\rho l \la_s}M_{1/2}(\la x)-\frac{k_s (T_m-T_i)}{\rho l \la_s^2}F_{2}(x),
\end{equation}
\begin{equation}\label{F-2al}
F_{2}(x)=\frac{M_{1/2}(x)}{W\left(-x,-\frac{ 1}{2}, 1 \right)}
\end{equation}
and 
\begin{equation}\label{la}
\la=\frac{\la_s}{\la_l}>0.
\end{equation}
Taking into account that $\G(3/2)=\frac{\sqrt{\pi}}{2}$, $M_{1/2}(x)=e^{-(x/2)^2}$ and that \linebreak  $W\left(-x,-\frac{ 1}{2}, 1 \right)={\rm erfc}\left(\frac{x}{2}\right)$ (see \cite{RoSa:2013}), it results that 
\begin{equation}\label{theta_s_lim}
\T^1_s(x,t) =T_i+(T_m-T_i)\frac{{\rm erfc}\left(\varsims\right)}{{\rm erfc}\left(\frac{\mu_1}{2}\right)},
\end{equation}  
\begin{equation}\label{theta_l_lim}
\T^1_l(x,t) = T_m+\frac{q_0\la_l \sqrt{\pi}}{k_l}\left[ {\rm{erfc}}\left( \frac{x}{\lambda_l t^{1/2}}\right)- {\rm{erfc}}\left(\frac{\lambda \mu_1}{2}\right) \right],
\end{equation}  
\begin{equation}\label{r_lim}
r_1(t)=\mu_1\la_s\sqrt{t},
\end{equation}  
where $\mu_1>0$ is the solution of the equation:

\begin{equation}\label{eq mu clasica-1}
\frac{q_0}{\rho l \la_s}{\rm exp}\left(-\frac{\la^2x^2}{4}\right)-\frac{k_s(T_m-T_i)}{\rho l \la_s^2 \sqrt{\pi}}\frac{{\rm exp}\left(-\frac{x^2}{4}\right)}{{\rm erfc}\left(\frac{x}{2}\right)}=\frac{x}{2}, \quad x>0
\end{equation}
or equivantely, $\frac{\mu_1}{2}$ is a solution of the equation:
\begin{equation}\label{eq mu clasica-2}
\frac{q_0}{\rho l \la_s}{\rm exp}\left(-\la^2x^2\right)-\frac{k_s(T_m-T_i)}{\rho l \la_s^2 \sqrt{\pi}}\frac{{\rm exp}\left(-x^2\right)}{{\rm erfc}\left(x\right)}=x, \quad x>0.
\end{equation}

Therefore, the tender $\left\{\T_s^1(x,t), \T_l^1(x,t), r_1(t)\right\}$, where  $\mu_1/2$ is    the solution of the equation (\ref{eq mu clasica-2}), is the solution of the problem (\ref{FP1}) given in \cite{Tar:1981}. 

\endproof

\begin{teo}. Let  $T_i<T_m$ be. If the coefficient  $q_0$ satisfies the inequality (\ref{cond para cambio de fase}) and the Conjecture 1 holds,  then the similarity-solution to the problem (\ref{FPalpha}) converges to the similarity-solution to the classical Lam\'e-Clapeyron-Stefan problem (\ref{FP1}) when  $\al \rightarrow 1^-$. 
\end{teo}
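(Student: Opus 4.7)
The plan is to pass to the limit $\al \to 1^-$ in each of the three defining formulas (\ref{Theta_l}), (\ref{Theta_s}), (\ref{r}), and verify pointwise convergence to the classical Neumann triple $\{\T^1_s, \T^1_l, r_1\}$ recovered in the preceding proposition. The three ingredients are: (a) the termwise limits of the special functions, namely $W(-x,-\frac{\al}{2},1) \to \operatorname{erfc}(x/2)$ from Proposition \ref{Propiedades Wright} item \ref{casos part W}, together with the analogous $M_{\al/2}(x) \to M_{1/2}(x) = \frac{1}{\sqrt{\pi}} e^{-x^2/4}$ obtained termwise from the series defining $M_{\al/2}$; (b) continuity of $\G$ at $1/2$ and $3/2$, which gives $\G(1-\al/2) \to \sqrt{\pi}$ and $\G(1+\al/2)/\G(1-\al/2) \to 1/2$; and (c) the convergence $\mu_\al \to \mu_1$ of the characteristic coefficient.

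First I would dispatch (a) and (b). Both series $W(-x,-\al/2,1)=\sum_{n\ge 0}(-x)^n/[n!\,\G(-\al n/2+1)]$ and the analogous series for $M_{\al/2}$ converge absolutely and uniformly on compact subsets of $\bbR$, uniformly for $\al$ in a neighbourhood of $1$, so the limits are obtained termwise. Combined with the Gamma limits in (b), this yields pointwise convergence of every $\al$-dependent factor appearing in (\ref{Theta_l}) and (\ref{Theta_s}), provided $\mu_\al$ itself converges.

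Second, the heart of the argument is the convergence of $\mu_\al$. Under Conjecture 1 the function $G_\al$ is continuous and strictly decreasing on $\bbR^+$, so (\ref{eq mu-1}) has a unique positive root. I would proceed in three substeps: (i) \emph{a priori boundedness}. Using the explicit value of $G_\al(0^+)$ in (\ref{prop-G_al}) (which converges to $G_1(0^+)$ as $\al \to 1^-$) together with the decay $G_\al(+\infty)=-\infty$ and the fact that the right--hand side of (\ref{eq mu-1}) has slope $\G(1+\al/2)/\G(1-\al/2)\to 1/2$, one obtains positive constants $0<m\le M<\infty$, independent of $\al$ in some left neighbourhood of $1$, such that $\mu_\al\in[m,M]$. (ii) \emph{Uniform convergence of $G_\al$}. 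The termwise estimates used in (a) give $G_\al\to G_1$ uniformly on $[m,M]$. (iii) \emph{Continuity of roots}. If $\mu_{\al_k}\to \mu^*$ along any subsequence, passing to the limit in $G_{\al_k}(\mu_{\al_k})=\tfrac{\G(1+\al_k/2)}{\G(1-\al_k/2)}\mu_{\al_k}$ yields $G_1(\mu^*)=\mu^*/2$; since $\mu_1$ is the unique solution of this equation, $\mu^*=\mu_1$, and the whole net converges.

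With $\mu_\al\to\mu_1$ in hand, substituting into (\ref{Theta_l}), (\ref{Theta_s}), (\ref{r}) and invoking (a) and (b) gives $\T_l(x,t)\to\T^1_l(x,t)$ and $\T_s(x,t)\to\T^1_s(x,t)$ pointwise for $(x,t)$ away from the free boundary, and $r(t)\to r_1(t)=\mu_1\la_s\sqrt{t}$. The main obstacle is substep (i): extracting an $\al$-uniform bound on $\mu_\al$ so that the continuity-of-roots argument can be applied on a fixed compact set where $G_\al\to G_1$ uniformly. Once that bound is secured, substeps (ii)–(iii) are standard, and the final substitution is purely algebraic.
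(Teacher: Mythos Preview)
Your argument is correct and, in fact, considerably more detailed than what the paper offers: the paper states this theorem with no proof at all. Immediately after the statement the next subsection begins; the authors apparently regard the convergence as an evident consequence of the preceding Proposition, which merely evaluates the formulas at $\al=1$ and checks they reproduce the classical Neumann solution. That proposition, however, does not address the continuity of the map $\al\mapsto\mu_\al$, which is exactly the nontrivial point you isolate.

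One small comment on the logical order of your substeps. As written, substep~(i) (the a~priori bound $\mu_\al\in[m,M]$) is used to fix the compact set on which you then prove uniform convergence in substep~(ii). But to produce the bound~$M$ you implicitly need to know that $G_\al(M)$ is close to $G_1(M)$ for some large fixed~$M$, i.e.\ you already need a piece of~(ii). The cleanest way to untangle this is: first prove that $G_\al\to G_1$ uniformly on every compact subset of $[0,\infty)$ (this follows from the termwise bounds you describe, which are uniform in $\al$ on any $[\al_0,1]$); then choose $M$ with $G_1(M)<M/4$ and $m>0$ with $G_1(m)>2c m$ for all $c$ near $1/2$, and use the uniform convergence on $[0,M]$ to transfer these inequalities to $G_\al$ for $\al$ close to~$1$. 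After this reordering your continuity-of-roots argument in~(iii) goes through verbatim, and the final substitution into (\ref{Theta_l})--(\ref{r}) is, as you say, purely algebraic.
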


\subsection{Two--phase fractional Stefan problems with a heat flux and a temperature boundary condition at the fixed face admitting the same similarity solution}
\label{sec:3}

Let  $T_i<T_m$ be. If the coefficient $q_0$  satisfies the inequality (\ref{cond para cambio de fase}), then the solution of the problem (\ref{FPalpha} )  is given by (\ref{Theta_l}), (\ref{Theta_s}) and (\ref{r}) where $\mu_\al$  is a solution of the equation (\ref{eq mu-1}). In this case, we can compute the liquid temperature $\Theta_l$  at the fixed face $x=0$, which is given by:
  
\begin{equation}\label{Theta_l(0.t)}
\T_l(0^+,t)=T_m+\frac{q_0\la_l \G(1-\al/2)}{k_l}\left[1-W\left(-\la \mu_\al; -\frac{\al}{2},1\right)\right]>T_m, \quad \forall \, t>0.
\end{equation}
Since this temperature is greater than $T_m$ the melting temperature   and it is constant for all positive time, we can consider  the following fractional free boundary problem:\\

\textbf{Problem: }  Find the free boundary  $x=s(t)$, defined for $t>0$, and the temperature  \linebreak $T=T(x,t)$, defined for $x>0, t>0$  such that the following equations and conditions are satisfied ($0<\al<1$):

\begin{equation}\label{TPalpha}
\begin{array}{lll}
     (i)  &   D^\al_tT_s-\lambda_s^2\,{T_s}_{xx}=0, &   x>s(t), \,  t>0,\\
     (ii) &   D^\al_tT_l-\lambda_l^2\,{T_l}_{xx}=0, &   0<x<s(t), \,  t>0\\ 
    (iii) & s(0)=0, \\
     (iv) & T_s(x,0)=T_s(+\infty,t)=T_i<T_m & x>0, \, t>0, \\
     (v)  &  T_s(s(t),t)=T_m,   &  t>0,  \\
      (vi)  &  T_l(s(t),t)=T_m,   &  t>0,  \\
     (vii) & k_s {T_s}_x(s(t),t)-k_l{T_l}_x(s(t),t)=\rho l D^\al_t s(t), & t>0,\\    
        (viii) & T(0,t)=T_0, & t>0, 
                                             \end{array}
\end{equation}
where the imposed temperature $T_0$ at the fixed face  $x=0$ is greater than the melting temperature, that is  $T_0>T_m$. The problem (\ref{TPalpha}) was recently solved in \cite{RoTa:2014} and the solution is given by:

\begin{equation}\label{T_l}
\begin{split}
T_m<T_l(x,t)=T_m+(T_0-T_m)\frac{\Wlaml -W\left(-\la \xi_\al, -\frac{\al}{2},1\right) }{1-W\left(-\la \xi_\al, -\frac{\al}{2},1\right)}\\
=T_0-(T_0-T_m)\frac{1-\Wlaml }{1- W\left(-\la \xi_\al, -\frac{\al}{2},1\right)}, \quad 0<x<s(t), \quad t>0
\end{split}
\end{equation}
\begin{equation}\label{T_s}
\begin{split}
T_i<T_s(x,t)=T_i+(T_m-T_i)\frac{\Wlams}
{W\left(-\xi_\al, -\frac{\al}{2},1\right)}=\\
=T_m-(T_m-T_i)\left[1-\frac{\Wlams}{W\left(-\xi_\al;\frac{\al}{2},1\right)}\right]<T_m, \quad x>s(t), \, t>0
\end{split}
\end{equation}
\begin{equation}\label{s}
s(t)=\xi_\al \la_s t^{\al/2}
\end{equation}
where the coefficient  $\xi=\xi_\al>0$ is a  solution of the following equation:
\begin{equation}\label{eq xi}
F_\al(x)=\frac{\G\left(1+\frac{\al}{2}\right)}{\G\left(1-\frac{\al}{2}\right)}x, \qquad x>0
\end{equation}
with
\begin{equation}\label{F-al}
F_\al(x)=\frac{k_l(T_0-T_m)}{\rho l \la_s\la_l}F_{1\al}(\la x)-\frac{k_s (T_m-T_i)}{\rho l \la_s^2}F_{2\al}(x),
\end{equation}
\begin{equation}\label{F-1al}
F_{1\al}(x)=\frac{M_{\al/2}(x)}{1-W\left(-x,-\frac{ \al}{2}, 1 \right)}
\end{equation}
and $F_{2\al}$ was defined in (\ref{F-2al}).

\begin{propo}Let $T_i<T_m$  be. If the coefficient $q_0$  satisfies the inequality (\ref{cond para cambio de fase})  then both  free boundary problems (\ref{FPalpha}) and (\ref{TPalpha}) with   data $T_0$  given by
\begin{equation}\label{T_0}
T_0=T_m+\frac{q_0\la_l \G(1-\al/2)}{k_l}\left[1-W\left(-\la \mu_al;-\frac{\al}{2},1\right)\right]
\end{equation}
admit the same similarity solutions.
\end{propo}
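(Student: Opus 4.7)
The plan is to exploit the fact that problems (\ref{FPalpha}) and (\ref{TPalpha}) share conditions $(i)$--$(vii)$ verbatim and differ only in the boundary datum imposed at $x=0$ (heat flux versus temperature). Consequently, if the similarity triple $\{\T_l,\T_s,r\}$ produced by Proposition \ref{exist-sol-FPalpha} happens to satisfy the Dirichlet condition $\T_l(0^+,t)=T_0$ for some $t$-independent constant $T_0$, then this same triple is automatically a similarity solution of (\ref{TPalpha}) with that value of $T_0$. Invoking the uniqueness of the similarity solution to (\ref{TPalpha}) (granted by Conjecture 1 and the construction in \cite{RoTa:2014}) will then force $\xi_\al=\mu_\al$, $T_l\equiv\T_l$, $T_s\equiv\T_s$, and $s\equiv r$.

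The first step is the evaluation of (\ref{Theta_l}) at $x=0^+$. Since $W(0,-\al/2,1)=1/\G(1)=1$ from definition (\ref{W}), the expression collapses to
\begin{equation*}
\T_l(0^+,t)=T_m+\frac{q_0\la_l\G(1-\al/2)}{k_l}\left[1-W\!\left(-\la\mu_\al,-\tfrac{\al}{2},1\right)\right],
\end{equation*}
which is manifestly independent of $t$ and coincides with the right-hand side of (\ref{T_0}). This one-line computation (essentially already recorded in (\ref{Theta_l(0.t)})) shows that the solution of (\ref{FPalpha}) verifies condition $(\ref{TPalpha}-viii)$ with exactly the $T_0$ of the statement.

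To close the argument, one either invokes the uniqueness discussed above or verifies directly, as an algebraic cross-check, that $\mu_\al$ solves equation (\ref{eq xi}) when $T_0$ is given by (\ref{T_0}). Substituting $T_0-T_m$ into the coefficient $k_l(T_0-T_m)/(\rho l\la_s\la_l)$ of $F_{1\al}(\la\mu_\al)$ in (\ref{F-al}), the factor $1-W(-\la\mu_\al,-\al/2,1)$ cancels with the denominator of $F_{1\al}(\la\mu_\al)=M_{\al/2}(\la\mu_\al)/[1-W(-\la\mu_\al,-\al/2,1)]$, so that the first term of $F_\al(\mu_\al)$ reduces to the same multiple of $M_{\al/2}(\la\mu_\al)$ that appears as the first term of $G_\al(\mu_\al)$ in (\ref{G-al}); since the $F_{2\al}$ contributions coincide term by term, equation (\ref{eq xi}) evaluated at $x=\mu_\al$ reduces to equation (\ref{eq mu-1}), which already holds by construction. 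The anticipated obstacle lies not in this algebraic identification, which is routine bookkeeping, but in the uniqueness step: without Conjecture 1 one can only conclude that $\mu_\al$ is \emph{a} root of (\ref{eq xi}), not necessarily the specific root $\xi_\al$.
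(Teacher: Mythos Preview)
Your approach is essentially the paper's: evaluate $\T_l$ at $x=0^+$ to read off $T_0$, then perform the algebraic cross-check showing $F_\al(\mu_\al)=G_\al(\mu_\al)$ via the cancellation of $1-W(-\la\mu_\al,-\al/2,1)$ against the denominator of $F_{1\al}$, which is exactly the computation the paper records in~(\ref{equiv}). Your worry about uniqueness is unnecessary for \emph{this} Proposition: the paper does not invoke Conjecture~1 here but simply takes $\xi_\al:=\mu_\al$ as \emph{a} solution of~(\ref{eq xi}) and observes that the resulting triples $\{T_l,T_s,s\}$ and $\{\T_l,\T_s,r\}$ coincide; uniqueness (and hence the stronger equivalence) is deferred to the subsequent Theorem, where Conjecture~1 is assumed.
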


\proof 
Let  $T_i<T_m$ be. If the coefficient  $q_0$ satisfies the inequality (\ref{cond para cambio de fase}) then the solution of the free boundary problem (\ref{FPalpha}) is given by (\ref{Theta_l})--(\ref{r}), where the coefficient $\mu_\al$  is a solution of equation (\ref{eq mu-1}). In this case, the temperature at the fixed face $x=0$  is given by (\ref{Theta_l(0.t)}) and therefore we can now consider the free boundary problem (\ref{TPalpha}) with data $(\ref{TPalpha}-vii)$ at the fixed face  $x=0$, where $T_0$  is defined by (\ref{Theta_l(0.t)}). Note that 
\begin{equation}\label{equiv}
\begin{split}
F_\al(\mu_\al)&=\frac{k_l(T_0-T_m)}{\rho l \la_s\la_l}F_{1\al}(\la \mu_\al)-\frac{k_s (T_m-T_i)}{\rho l \la_s^2}F_{2\al}(\mu_\al)
\\
&=\frac{q_0 \G(1-\al/2)}{\rho l \la_s}\left[1-W\left(-\la \mu_\al;-\frac{\al}{2}, 1\right)\right]F_{1\al}(\la \mu_\al)-\frac{k_s (T_m-T_i)}{\rho l \la_s^2}F_{2\al}(\mu_\al)\\
&= \frac{q_0 \G(1-\al/2)}{\rho l \la_s}
 M_{\al/2}(\la \mu_\al)-\frac{k_s (T_m-T_i)}{\rho l \la_s^2}F_{2\al}(\mu_\al)=G_\al(\mu_\al)
\end{split}
\end{equation}
Then, we can affirm that $\mu_\al$ is a solution to $(\ref{eq mu-1})$ if and only if  $\mu_\al$ is a solution to $(\ref{eq xi})$.\\
Therefore, we have   solutions  given by (\ref{T_l})--(\ref{s}) and (\ref{Theta_l})--(\ref{r}) to problems (\ref{TPalpha}) and (\ref{FPalpha})  respectively, where the coefficient $\xi_\al=\mu_\al$.\\
Clarely, for every $0<\al<1$,  it results that $r(t)=s(t)$ for all $t>0$. Moreover $T_s(x,t)=\T_s(x,t)$ and  $T_l(x,t)=\T_l(x,t)$, and the thesis holds. 
\endproof

\begin{teo}  Let $T_i<T_m$  be. If the coefficient $q_0$  satisfies the inequality (\ref{cond para cambio de fase}), under the assumption of conjecture 1,  then the free boundary problem (\ref{FPalpha}) is equivalent to the free boundary problem (\ref{TPalpha}), in the sense of similarity solutions,  with data $T_0$  given by:
 \begin{equation}\label{T_0}
T_0=T_m+\frac{q_0\la_l \G(1-\al/2)}{k_l}\left[1-W\left(-\la \mu_\al;-\frac{\al}{2},1\right)\right].
\end{equation}
\end{teo}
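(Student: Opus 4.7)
The content of Theorem 3 is essentially a uniqueness--enhanced restatement of Proposition 3, so my plan is to build on that Proposition and then close the loop in both directions.

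First, I would invoke Proposition 3 to get the forward direction: starting from problem (\ref{FPalpha}) with $q_0$ satisfying (\ref{cond para cambio de fase}), Proposition \ref{exist-sol-FPalpha} produces the similarity solution $\{\Theta_s,\Theta_l,r\}$ given by (\ref{Theta_l})--(\ref{r}) with $\mu_\al$ a root of (\ref{eq mu-1}). Evaluating $\Theta_l$ at $x=0$ via (\ref{Theta_l(0.t)}) defines the constant $T_0$ of (\ref{T_0}), and with this choice of $T_0$ the algebraic identity (\ref{equiv}) established inside the proof of Proposition 3 shows $F_\al(\mu_\al)=G_\al(\mu_\al)$. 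Thus the same $\mu_\al$ is also a root of (\ref{eq xi}), and substituting it into (\ref{T_l})--(\ref{s}) reproduces exactly $\{\Theta_s,\Theta_l,r\}$. This gives one direction of the equivalence.

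For the converse, I would start from the similarity solution $\{T_s,T_l,s\}$ of (\ref{TPalpha}) with the prescribed $T_0$ from (\ref{T_0}), namely (\ref{T_l})--(\ref{s}) with $\xi_\al$ a root of (\ref{eq xi}). The only condition of (\ref{FPalpha}) not automatically inherited is the fractional heat flux condition $(\ref{FPalpha}\text{-}viii)$ at $x=0$. I would verify it by a direct computation: differentiate (\ref{T_l}) in $x$, use Proposition \ref{Propiedades Wright}.\ref{deriv W} to rewrite the derivative in terms of $M_{\al/2}$, evaluate at $x=0$ using $M_{\al/2}(0)=1/\G(1-\al/2)$, and then substitute the expression for $T_0-T_m$ provided by (\ref{T_0}). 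The factor $[1-W(-\la\mu_\al,-\al/2,1)]$ cancels against the denominator $1-W(-\la\xi_\al,-\al/2,1)$ (since $\xi_\al=\mu_\al$, as I argue below), leaving precisely $-q_0/t^{\al/2}$.

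To promote this coincidence of solutions into a genuine equivalence, I need uniqueness on both sides. Under Conjecture 1 the function $F_{2\al}$ is strictly increasing, so $G_\al$ in (\ref{G-al}) is strictly decreasing in $\bbR^+$; together with $G_\al(0^+)>0$ (which follows from (\ref{cond para cambio de fase})) and $G_\al(+\infty)=-\infty$, this forces $\mu_\al$ to be the unique positive root of (\ref{eq mu-1}). The analogous argument for $F_\al$ in (\ref{F-al})--(\ref{F-1al}), already carried out in \cite{RoTa:2014} under the same hypothesis on $F_{2\al}$, yields a unique $\xi_\al$ for (\ref{eq xi}); the identity (\ref{equiv}) then forces $\xi_\al=\mu_\al$. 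This uniqueness is what upgrades ``admit the same similarity solution'' (Proposition 3) to ``are equivalent in the sense of similarity solutions'' (Theorem 3).

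The main obstacle is not the algebra, which is light once (\ref{equiv}) is in hand, but ensuring that the hypothesis of Conjecture 1 is sufficient to deliver the uniqueness required on \emph{both} equations (\ref{eq mu-1}) and (\ref{eq xi}); one must check that the monotonicity of $F_{1\al}$ needed to conclude strict monotonicity of $F_\al$ is already proved in \cite{RoTa:2014} without any new conjecture, so that the only unresolved assumption in Theorem 3 is the one explicitly flagged. Everything else reduces to substituting (\ref{T_0}) into (\ref{F-al}) and collapsing the $F_{1\al}$--expression into the $M_{\al/2}$--expression appearing in $G_\al$, precisely as in (\ref{equiv}).
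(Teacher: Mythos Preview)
Your proposal is correct and follows the same route as the paper: the paper's own proof is a single sentence noting that Conjecture~1 makes the positive root of (\ref{eq xi}) unique, with everything else inherited from the preceding Proposition (your ``Proposition~3'' is the paper's Proposition~4). Your write-up is considerably more detailed than the paper's---you make the two directions and the two uniqueness claims explicit, and you correctly flag that the monotonicity of $F_{1\al}$ must come from \cite{RoTa:2014}---but the underlying argument is the same.
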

\proof If the Conjecture 1 is true, then equation $(\ref{eq xi})$ admits a unique positive solution.

\subsection{Inequality for the coefficient which characterizes the free boundary for the two--phase fractional Stefan problem with a temperature boundary condition at the fixed face}
\label{sec:4}

Now, we consider problem (\ref{TPalpha})  with data $T_i<T_m<T_0$, whose solution given by (\ref{T_l})--(\ref{eq xi}) has been recently obtained in \cite{RoTa:2014}. 

\begin{teo} The coefficient $\xi_\al$  which characterizes the phase--change interface (\ref{s}) of the free boundary problem (\ref{TPalpha} ) verifies the inequality
 \begin{equation}\label{ineq xi}
1-W\left(-\frac{\la_s}{\la_l}\xi_\al; -\frac{\al}{2},1\right)<\frac{T_0-T_m}{T_m-T_i}\frac{k_l \la_s}{k_s \la_l}.
\end{equation}
\end{teo}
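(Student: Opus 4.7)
The plan is to reduce the inequality (\ref{ineq xi}) to the necessary condition for an instantaneous phase change in the heat-flux formulation (Lemma 1, part 2), via the problem equivalence underlying Proposition 3. Given the temperature problem (\ref{TPalpha}) with $T_i<T_m<T_0$, its similarity solution (\ref{T_l})--(\ref{s}) exists with $\xi_\al>0$ solving (\ref{eq xi}).

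First I would compute the heat flux at the fixed face $x=0$ induced by the liquid temperature (\ref{T_l}). Differentiating in $x$ by Proposition \ref{Propiedades Wright} item \ref{deriv W} and using the identity $W(-y;-\al/2,1-\al/2)=M_{\al/2}(y)$ together with $M_{\al/2}(0)=1/\G(1-\al/2)$, I obtain
$$k_l T_{l,x}(0,t)=-\frac{q_0}{t^{\al/2}},\qquad q_0:=\frac{k_l(T_0-T_m)}{\la_l\,\G(1-\al/2)\bigl[1-W(-\la\xi_\al;-\al/2,1)\bigr]}>0,$$
which is precisely the relation (\ref{T_0}) solved for $q_0$ (with $\mu_\al$ replaced by $\xi_\al$).

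Next I would invoke the algebraic identity (\ref{equiv}) established in the proof of Proposition 3, which holds whenever $T_0$ and $q_0$ are linked by the above formula and yields $G_\al(\xi_\al)=F_\al(\xi_\al)$. Since $\xi_\al$ solves (\ref{eq xi}), it therefore also solves (\ref{eq mu-1}) for this choice of $q_0$. Consequently, the heat-flux problem (\ref{FPalpha}) with this $q_0$ admits the same similarity triple $\{\T_l,\T_s,r\}=\{T_l,T_s,s\}$; in particular it exhibits an instantaneous phase change, since $r(t)=\xi_\al\la_s t^{\al/2}>0$ for all $t>0$.

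The final step applies the contrapositive of Lemma 1 part 2: the occurrence of an instantaneous phase change forces $q_0>k_s(T_m-T_i)/[\la_s\G(1-\al/2)]$. Plugging in the explicit expression for $q_0$ and cross-multiplying by the positive quantities $\la_l\G(1-\al/2)\bigl[1-W(-\la\xi_\al;-\al/2,1)\bigr]$ and $\la_s$ yields (\ref{ineq xi}). The only computation requiring care is the derivative of the Wright function at the boundary; beyond that the proof is purely algebraic, so I do not anticipate any real obstacle.
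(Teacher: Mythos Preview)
Your proposal is correct and follows essentially the same route as the paper: compute the induced heat-flux coefficient $q_0$ at $x=0$ from the liquid temperature $T_l$ (obtaining exactly the paper's formula (\ref{q0=})), observe that the resulting problem (\ref{FPalpha}) exhibits an instantaneous phase change, hence by Lemma~1 the necessary condition (\ref{cond para cambio de fase}) must hold, and rearrange. The paper's own proof is terser---it does not spell out the detour through Proposition~3 and simply asserts that the computed $q_0$ satisfies (\ref{cond para cambio de fase})---but the underlying logic is identical to yours.
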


\proof If we consider the solution (\ref{T_l})-(\ref{eq xi}) of the free boundary problem  (\ref{TPalpha}) where the coefficient  $\xi_\al >0$ is a solution of the equation (\ref{eq xi}) for data $T_0>T_m$, then, by taking into account Proposition \ref{Propiedades Wright}, we have that the corresponding coefficient  $q_0$ (which characterizes the heat flux boundary condition $(\ref{FPalpha}-viii)$ on the fixed face $x=0$) is given by:
\begin{equation}\label{q0=}
q_0= \frac{T_0-T_m}{1-W\left(-\la \mu_\al; -\frac{\al}{2},1\right)} \frac{k_l}{\la_l \G(1-\al/2)}.
\end{equation}

 and then we can compute the coefficient  $q_0$. Therefore, the inequality (\ref{cond para cambio de fase}) for $q_0$  is transformed in the inequality (\ref{ineq xi}) for the coefficient $\xi_\al$  defined in \cite{RoTa:2014}, and therefore the result holds.
\endproof


\begin{obs} If we consider $\al=1$  in the inequality (\ref{ineq xi}) we obtain the inequality 
\begin{equation}{\rm erf}\left(\frac{\la_s}{\la_l}\frac{\mu_1}{2}\right)<\frac{T_0-T_m}{T_m-T_i}\frac{k_l\la_s}{k_s\la_l}  
\end{equation}
given in \cite{Tar:1981} for the  Neumann solution for the classical two--phase Stefan problem. 
\end{obs}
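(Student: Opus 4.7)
The plan is to specialize inequality (\ref{ineq xi}) at $\alpha=1$ and match the resulting expression to the classical Neumann inequality attributed to \cite{Tar:1981}. First, I would set $\alpha = 1$ on the left-hand side of (\ref{ineq xi}) and invoke Proposition \ref{Propiedades Wright} item \ref{casos part W}, which records the identity $1 - W\!\left(-x;-\tfrac{1}{2},1\right) = \mathrm{erf}\!\left(\tfrac{x}{2}\right)$ for $x>0$. Applied with $x = \frac{\la_s}{\la_l}\xi_1$, this converts the Wright expression on the left into $\mathrm{erf}\!\left(\frac{\la_s}{\la_l}\frac{\xi_1}{2}\right)$. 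The right-hand side of (\ref{ineq xi}) carries no dependence on $\alpha$, so no manipulation is required there.

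Next, I would identify the coefficient $\xi_1$ that determines the free boundary in (\ref{s}) at $\alpha=1$ with the classical Neumann coefficient $\mu_1$ appearing in (\ref{r_lim}). This identification is already implicit in the preceding subsection: using $\Gamma(3/2)/\Gamma(1/2) = 1/2$, together with the limit reductions of the auxiliary functions $F_{1\alpha}(x) \to \exp(-x^2/4)/\mathrm{erf}(x/2)$ and $F_{2\alpha}(x) \to \exp(-x^2/4)/\mathrm{erfc}(x/2)$ as $\alpha \to 1^-$, one sees that equation (\ref{eq xi}) reduces at $\alpha=1$ precisely to the classical transcendental equation that determines $\mu_1$ in the Neumann formulation of \cite{Tar:1981}. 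Hence $\xi_1 = \mu_1$.

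Combining both steps produces exactly $\mathrm{erf}\!\left(\frac{\la_s}{\la_l}\frac{\mu_1}{2}\right) < \frac{T_0-T_m}{T_m-T_i}\frac{k_l\la_s}{k_s\la_l}$, which is the claimed classical inequality. The main ``obstacle'' is really the bookkeeping verification that equation (\ref{eq xi}) collapses cleanly to the classical Neumann equation at $\alpha=1$, so that the identification $\xi_1 = \mu_1$ is legitimate; no delicate analytic estimates are required. Everything else is a direct specialization of (\ref{ineq xi}) supported by the Wright-to-$\mathrm{erf}$ limit already catalogued in Proposition \ref{Propiedades Wright}.
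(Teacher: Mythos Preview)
Your proposal is correct and follows exactly the implicit reasoning behind this remark: the paper does not supply a separate proof here, since the statement is an immediate specialization of (\ref{ineq xi}) via Proposition~\ref{Propiedades Wright} item~\ref{casos part W}, together with the identification of the $\alpha=1$ coefficient with the classical Neumann coefficient already carried out in the preceding subsections. Your discussion of why $\xi_1=\mu_1$ is a bit more detailed than what the paper spells out, but it is the right justification and nothing further is needed.
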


\section{ The One--Phase Fractional Stefan Problem}
In \cite{RoSa:2013}, the following two one-phase fractional Lam\'e--Clapeyron--Stefan problems were studied:
\begin{equation}{\label{FPalpha-one-phase}}
\begin{array}{rll}
     (i)  &   D^\al_t\Theta-\lambda^2\,{\Theta}_{xx}=0, &    0<x<r(t), \,  t>0\\ 
    (ii) & r(0)=0, \\
     (iii)  &  \T(r(t),t)=T_m,   &  t>0,  \\
     (iv) & -k{\T}_x(r(t),t)=\rho l D^\al_t r(t), & t>0,\\    
        (v) & k{\T}_x(0,t)=-\frac{q_0}{t^{\al/2}}, & t>0, 
                                             \end{array}
                                             \end{equation}

and
\begin{equation}{\label{TPalpha-one-phase}}
\begin{array}{rll}
     (i)  &   D^\al_t T-\lambda^2\,{T}_{xx}=0, &    0<x<s(t), \,  t>0\\ 
    (ii) & s(0)=0, \\
     (iii)  &  T(s(t),t)=T_m,   &  t>0,  \\
     (iv) & -k{T}_x(s(t),t)=\rho l D^\al_t s(t), & t>0,\\    
        (v) & k{T}_x(0,t)=T_0, & t>0, 
                                             \end{array}
                                             \end{equation}

where $\la^2=\frac{k}{\rho c}$. These two problems can be considered as particular cases of the free boundary problems (\ref{FPalpha}) and (\ref{TPalpha}) respectively.

\begin{coro} The results given in \cite{RoSa:2013} for the one-phase fractional Stefan problems (\ref{FPalpha-one-phase}) and (\ref{TPalpha-one-phase}) can be recovered by taking  $T_i=T_m$ in the free boundary problems (\ref{FPalpha}) and (\ref{TPalpha}) respectively.
\proof It is sufficient to observe that the inequality (\ref{cond para cambio de fase}) is automatically verified if we take $T_i=T_m$  because $q_0>0$. Then the two free boundary problems (\ref{FPalpha-one-phase}) and (\ref{TPalpha-one-phase}) are equivalents respect on similarity solutions.

\end{coro}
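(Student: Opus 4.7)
The plan is to prove the corollary by direct substitution: set $T_i=T_m$ in the two--phase problems (\ref{FPalpha}) and (\ref{TPalpha}) and verify that the resulting systems reduce to the one--phase problems (\ref{FPalpha-one-phase}) and (\ref{TPalpha-one-phase}), with the two--phase similarity solutions collapsing onto the one--phase ones after the identifications $\lambda_l\leftrightarrow\lambda$, $k_l\leftrightarrow k$.

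First, I would observe that with $T_i=T_m$ the compatibility chain $(\ref{FPalpha}\text{-}iv)$--$(\ref{FPalpha}\text{-}v)$ forces the solid temperature to be the constant function $\Theta_s(x,t)\equiv T_m$ on $x\geq r(t)$. Indeed, this constant trivially satisfies the fractional diffusion equation $(\ref{FPalpha}\text{-}i)$ by (\ref{Caputo-deriv-constantes}), and it meets both the initial/far-field condition and the interface temperature condition. Consequently $\Theta_{s,x}(r(t),t)=0$, so the Stefan condition $(\ref{FPalpha}\text{-}vii)$ collapses to $-k_l\Theta_{l,x}(r(t),t)=\rho l\,D^\alpha r(t)$, which is exactly $(\ref{FPalpha-one-phase}\text{-}iv)$. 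The remaining conditions on $\Theta_l$ together with the heat flux condition at $x=0$ are verbatim those of (\ref{FPalpha-one-phase}) with $\lambda=\lambda_l$ and $k=k_l$.

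Second, I would check the restriction on the data. Inequality (\ref{cond para cambio de fase}) reduces to $q_0>0$ when $T_i=T_m$, which is automatically satisfied under the standing hypothesis $q_0>0$. Hence Proposition~\ref{exist-sol-FPalpha} applies and delivers the similarity solution $\{(\ref{Theta_l}),(\ref{Theta_s}),(\ref{r})\}$; I would then specialize these formulas: the quotient in (\ref{Theta_s}) multiplies $T_m-T_i=0$ and therefore (\ref{Theta_s}) becomes $\Theta_s(x,t)=T_m$, confirming the picture above. In the transcendental equation (\ref{eq mu-1}), the term $\frac{k_s(T_m-T_i)}{\rho l \lambda_s^2}F_{2\alpha}(x)$ in (\ref{G-al}) vanishes, so $\mu_\alpha$ becomes the unique (resp.\ a) positive root of
\[
\frac{q_0\lambda_l\,\Gamma(1-\alpha/2)}{\rho l\,\lambda_s}\,M_{\alpha/2}(\lambda x)=\frac{\Gamma(1+\alpha/2)}{\Gamma(1-\alpha/2)}\,x,
\]
which coincides with the equation for the free--boundary coefficient in \cite{RoSa:2013}.

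Third, the same strategy applies to problem (\ref{TPalpha}) versus (\ref{TPalpha-one-phase}). Setting $T_i=T_m$ again forces $T_s\equiv T_m$ by $(\ref{TPalpha}\text{-}iv)$--$(\ref{TPalpha}\text{-}vi)$, reduces $(\ref{TPalpha}\text{-}vii)$ to the one--phase Stefan condition, and cancels the $F_{2\alpha}$ contribution in (\ref{F-al}), so that $\xi_\alpha$ becomes the root of the one--phase equation of \cite{RoSa:2013}. There is no real obstacle here: the argument is a direct specialization, and the only point requiring mild attention is the bookkeeping that shows the constant function $T_m$ is the genuine similarity solution of the degenerate solid phase. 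Once these reductions are in place, the uniqueness provided by Proposition~\ref{exist-sol-FPalpha} (and its analogue in \cite{RoTa:2014} for (\ref{TPalpha})) guarantees that the two--phase similarity solutions agree with those of \cite{RoSa:2013}, finishing the proof.
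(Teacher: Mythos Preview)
Your proposal is correct and follows the same approach as the paper: the key observation is that inequality~(\ref{cond para cambio de fase}) becomes $q_0>0$ when $T_i=T_m$, and the rest is direct specialization of the two--phase formulas to the one--phase setting. One minor inaccuracy: Proposition~\ref{exist-sol-FPalpha} gives existence, not uniqueness (uniqueness in the paper rests on Conjecture~1), though in the degenerate case $T_i=T_m$ the $F_{2\alpha}$ term vanishes and the remaining equation has a strictly decreasing left side against a strictly increasing right side, so uniqueness does follow---just not from Proposition~\ref{exist-sol-FPalpha} as stated.
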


\section{Conclusions}
\begin{itemize}
	\item We have obtained a generalized Neumann solution for the two--phase fractional Lam\'e--Clapeyron--Stefan problem for a semi--infinite material with a constant initial condition and a heat flux boundary condition on the fixed face $x=0$, when a restriction on data is satisfied. The explicit solution is given through the Wright and Mainardi functions.
	
	\item When  $\al =1$, we have  recovered the Neumann solution through the error function for the corresponding classical two--phase Lam\'e--Clapeyron--Stefan problem given in \cite{Tar:1981}. We also recover the inequality for the corresponding coefficient that characterizes the heat flux boundary condition at  $x=0$. 
\item We have proposed a conjecture, from which it can be proved  the equivalence between the two-phase fractional Lam\'e--Clapeyron--Stefan problems with a heat flux and a temperature boundary conditions on the fixed face  $x=0$ for  similarity solutions. Moreover, an inequality for the coefficient which characterizes the free boundary given in \cite{RoTa:2014} was obtained.
\item 	We have recovered the results obtained in \cite{RoSa:2013} for the one--phase fractional Lam\'e--Clapeyron--Stefan problem as a particular case of the present work by taking $T_i=T_m$.
\end{itemize}

\section{Acknowledgements}

\noindent The present work has been sponsored by the Projects PIP N$^\circ$ 0275 from CONICET--Univ. Austral, and ANPCyT PICTO Austral N$^\circ 0090$ (Rosario, Argentina). The result of the Proposition 2 was communicated by the second author in the 1st Pan--Anamerican Congress on Computational Mechanics (Buenos Aires, 22--29 April 2015).

\end{document}